\title{Propagation in a fractional reaction-diffusion equation in a periodically hostile environment}
\author{Alexis L\'{e}culier\footnote{Institut de Math\'{e}matiques de Toulouse; UMR 5219, Universit\'{e} de Toulouse; CNRS, UPS IMT, F-31062 Toulouse Cedex 9, France; E-mail:  Alexis.Leculier@math.univ-toulouse.fr} , Sepideh Mirrahimi\footnote{Institut de Math\'{e}matiques de Toulouse; UMR 5219, Universit\'{e} de Toulouse; CNRS, UPS IMT, F-31062 Toulouse Cedex 9, France; E-mail:  Sepideh.Mirrahimi@math.univ-toulouse.fr}  and Jean-Michel Roquejoffre\footnote{Institut de Math\'{e}matiques de Toulouse; UMR 5219, Universit\'{e} de Toulouse; CNRS, UPS IMT, F-31062 Toulouse Cedex 9, France; E-mail:  Jean-Michel.Roquejoffre@math.univ-toulouse.fr} }
\begin{document}                     
\maketitle

\begin{abstract}
\noindent We provide an asymptotic analysis of a fractional Fisher-KPP type equation in periodic non-connected media with Dirichlet conditions outside the domain. After showing the existence and uniqueness of a non-trivial bounded stationary state $n_+$, we prove that it invades the unstable state zero exponentially fast in time.
\end{abstract}
\noindent{\bf Key-Words: } Non-local fractional operator, Fisher KPP, asymptotic analysis, exponential speed of propagation, perturbed test function\\
\noindent{\bf AMS Class. No:} {35K08, 35K57, 35B40, 35Q92.}

\tableofcontents

\newtheorem{theorem}{Theorem}
\newtheorem*{theorem*}{Theorem}
\newtheorem{corollary}{Corollary}
\newtheorem{lemma}{Lemma}
\newtheorem*{lemma*}{Lemma}
\newtheorem{definition}{Definition}
\newtheorem{proposition}{Proposition}
\newtheorem{definition-proposition}{Definition-Proposition}
\newtheorem*{notation}{Notation}
\newtheorem*{remark}{Remark}

\section{Introduction}

\subsection{Model and question}

We focus on the following equation :
\begin{equation}
\label{equation_1}
  \left\{
      \begin{aligned}
       &\partial_t n(x,t) + (-\Delta)^{\alpha} n(x,t)=n(x,t)(1-n(x,t))&& \ \text{ for } (x,t) \in \Omega \times ]0,\infty[,\\
       &n(x,t)=0&& \ \text{ for } (x,t) \in \Omega^c \times [0,\infty[,\\
       &n(x,0)=n_0(x),\\       
      \end{aligned}
    \right. 
\end{equation}
\textcolor{black}{\textcolor{black}{where} $\Omega$ is a periodic domain of $\mathbb{R}^d$ that will be specified later on, $n_0$ a compactly supported initial data and $(-\Delta)^\alpha$ the fractional Laplacian with $\alpha \in ]0,1[$ which is defined as follows :
\[\forall (x,t) \in \mathbb{R}^d \times ]0, +\infty[, \quad (-\Delta)^\alpha n (x,t)=C_\alpha \ PV \int_{\mathbb{R}^d} \dfrac{n(x,t)-n(y,t)}{|x-y|^{d+2\alpha}}dy \ \text{ where } C_\alpha = \dfrac{4^\alpha \Gamma(\frac{d}{2}+\alpha)}{\pi^{\frac{d}{2}}|\Gamma(-\alpha)|}.\]}
The main aim of this paper is to describe the propagation front associated to \eqref{equation_1}. We show that the stable state invades the unstable state with an exponential speed. 


Equation \eqref{equation_1} models the dynamic of a species subject to a non-local dispersion in a periodically hostile environment. The quantity $n(x,t)$ stands for the density of the population at position $x$ and time $t$. The fractional Laplacian describes the motion of individuals, it takes into account the possibility of "\textcolor{black}{large} jump" (move rapidly) of individuals from one point to another \textcolor{black}{with a high rate}, for instance because of human activities for animals or because of the wind for seeds. 
The term $(1-n(x,t))$ represents the growth rate of the population at position $x$ and time $t$. The originality of this model is the following, \textcolor{black}{the reachable areas for the species are disconnected and periodic.} Here, we assume that the regions where the species can develop itself are homogeneous.

Many works deal with the case of a standard diffusion ($\alpha = 1$, see \cite{Hitchhiker} for a proof of the passage from the non-local to the local character of $(-\Delta)^\alpha$) with homogenous or heterogeneous environment (see \cite{Fisher}, \cite{KPP1937}, \cite{Aronson-Weinberger} and \cite{Freidlin_Gertner}). Closer to this article, Guo and Hamel in \cite{Guo-Hamel} focus on a Fisher-KPP equation with periodically hostile regions and a standard diffusion. The authors prove that the stable state invades the unstable state in the connected component of the support of the initial data. In our work, thanks to the non-local character of the fractional Laplacian, contrary to what happens in \cite{Guo-Hamel}, we show that there exists a unique non-trivial positive bounded stationary state, supported everywhere in the domain. \textcolor{black}{Moreover, this steady state invades the unstable state $0$ with an exponential speed.}

\subsection{Assumptions, notations and results}

\textcolor{black}{The domain $\Omega$ is a smooth non-connected periodic domain of $\mathbb{R}^d$
\begin{equation}\label{Omega}
\text{i.e. }\ \Omega = \underset{k \in \mathbb{Z}^d}{\bigcup} \Omega_0 + a_k, \text{ with }\Omega_0 \text{ a smooth bounded domain of } \mathbb{R}^d \text{ and } a_k \in \mathbb{R}^d.
\end{equation}
We assume that 
\[\left(\Omega_0 +  a_{i}\right) \cap (\Omega_0 + a_j)  \neq \emptyset \quad  \text{ if and only if } \quad  i = j.\]
Moreover, if we denote $e_i$ the $i^{th}$ vector of the canonical basis of $\mathbb{R}^d$ \textcolor{black}{then we assume that for all $k \in \mathbb{Z}^d$ there holds} $a_{k + e_i} - a_k =  a_{e_i}$. \textcolor{black}{Moreover, }we assume that the principal eigenvalue $\lambda_1$ of the Dirichlet operator $(-\Delta)^\alpha - Id$ in $\Omega_0$ is negative
\begin{equation} \label{H1}
\tag{H1}
\text{i.e. } \quad \lambda_1< 0.
\end{equation} }
\textcolor{black}{We also introduce the eigenvalue problem associated to the whole domain $\Omega$.} It is well known (thanks to the Krein Rutman theorem) that the principal eigenvalue $\lambda_0$ of the Dirichlet operator $(-\Delta)^\alpha - Id$ in $\Omega$ is simple in the algebraic and geometric sense and moreover, the associated principal eigenfunction $\phi_0$, solves 
\begin{equation}\label{lambda0}
\mathrm{i.e.} \ 
  \left\{
      \begin{aligned}
       &\left( (-\Delta)^\alpha -Id\right) \phi_0=\lambda_0 \phi_0  && \ \text{ in } \Omega, \\
       &\phi_0 = 0  && \ \text{ in }  \Omega^c, \\
       & \phi_0 \text{ has a constant sign that can be chosen positive.}
      \end{aligned}
    \right. 
\end{equation}

The first result of this paper ensures the existence and the uniqueness of a positive bounded stationary state $n_+$ of \eqref{equation_1}:
\begin{equation}\label{equation_1_stat_1}
\mathrm{i.e.} \ 
	\left\lbrace
		\begin{aligned}
			(-\Delta)^\alpha n_+  &= n_+-n_+^2 && \text{ in }  \Omega,\\
			n_+& =0 && \text{ in } \Omega^c.
		\end{aligned}
	\right.
\end{equation}
\begin{theorem}\label{theorem_existence_uniqueness_stationnary_state}
Under the assumption \eqref{H1}, there exists a unique positive and bounded stationary state $n_+$ to \eqref{equation_1}. Moreover, we have $0 \leq n_+ \leq 1$ and $n_+$ is periodic.
\end{theorem}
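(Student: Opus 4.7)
The plan is to obtain $n_+$ by a classical monotone iteration between ordered sub- and super-solutions, and then to extract uniqueness (with periodicity as a corollary) from a sliding argument relying on the KPP sub-homogeneity $f(\tau s)\ge \tau f(s)$ for $\tau\in[0,1]$ and $s\ge 0$, where $f(s)=s(1-s)$. A natural super-solution is $\overline n=\mathbf 1_\Omega$: in $\Omega$ one has $(-\Delta)^\alpha\mathbf 1_\Omega(x)=C_\alpha\int_{\Omega^c}|x-y|^{-d-2\alpha}\,dy>0=\overline n(1-\overline n)$, while the exterior Dirichlet condition holds with equality. For the sub-solution, using assumption \eqref{H1}, I would take $\phi_1\ge 0$ the principal Dirichlet eigenfunction of $(-\Delta)^\alpha-I$ on $\Omega_0$ (eigenvalue $\lambda_1<0$), extended by zero to $\mathbb{R}^d$, and set $\underline n(x)=\epsilon\sum_{k\in\mathbb{Z}^d}\phi_1(x-a_k)$. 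Writing $w_k(x)=\epsilon\phi_1(x-a_k)$, the inequality $\underline n\ge w_k$ on $\mathbb{R}^d$ with equality on $\Omega_0+a_k$ yields, for $x\in\Omega_0+a_k$,
\[
(-\Delta)^\alpha\underline n(x)\ \le\ (-\Delta)^\alpha w_k(x)\ =\ (1+\lambda_1)\,\underline n(x),
\]
so $\underline n$ is a stationary sub-solution as soon as $\epsilon\|\phi_1\|_\infty\le -\lambda_1$.

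For existence, I would then solve the parabolic equation \eqref{equation_1} starting from the periodic datum $\underline n$: by the comparison principle the solution is non-decreasing in $t$, stays below $\overline n=\mathbf 1_\Omega$, remains periodic since $\Omega$ is periodic and the Cauchy problem is well-posed, and converges as $t\to\infty$ to a periodic stationary state $n_+$ with $0<\underline n\le n_+\le 1$. The strong maximum principle for the fractional Laplacian then upgrades positivity to $n_+>0$ throughout $\Omega$.

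The hard part is uniqueness. Let $m$ be any other positive bounded stationary solution. A near-maximum evaluation (perturb $m$ by $\eta(1+|x|^2)^{-\beta}$, evaluate the equation at the maximiser of the perturbation and send $\eta\downarrow 0$) gives $m\le 1$, and the strong maximum principle yields $m>0$ in $\Omega$. I would compare $m$ with $n_+$ via $\tau^*:=\sup\{\tau>0: \tau n_+\le m\text{ in }\mathbb{R}^d\}$ and suppose, for contradiction, that $\tau^*<1$. At an interior contact point $x^*\in\Omega$ with $m(x^*)=\tau^* n_+(x^*)$, the identity
\[
(-\Delta)^\alpha(m-\tau^* n_+)(x^*)\ =\ f(m(x^*))-\tau^* f(n_+(x^*))\ =\ \tau^*(1-\tau^*)\,n_+(x^*)^2\ >\ 0
\]
clashes with the nonlocal minimum inequality $(-\Delta)^\alpha(m-\tau^* n_+)(x^*)\le 0$ that follows from $m-\tau^* n_+\ge 0$ on $\mathbb{R}^d$. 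The two genuinely delicate points in making this rigorous are (i) ensuring $\tau^*>0$, which rests on matched boundary asymptotics $m,n_+\asymp\mathrm{dist}(\cdot,\partial\Omega)^\alpha$ coming from Ros-Oton--Serra boundary regularity for fractional Dirichlet problems, and (ii) ensuring that the infimum is realised at an interior contact point, which I would handle by a compactness argument using translations along the lattice $\{a_k\}_{k\in\mathbb{Z}^d}$ together with uniform H\"older estimates for $m$ and $n_+$ to extract a limiting pair of solutions for which the touching occurs inside $\Omega$. Once $\tau^*\ge 1$ is established, exchanging the roles of $m$ and $n_+$ gives $m\equiv n_+$, and periodicity of $n_+$ follows automatically, since the periodic translate $n_+(\cdot-a_k)$ is another positive bounded stationary solution.
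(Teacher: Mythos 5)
Your overall strategy coincides with the paper's: existence by monotone iteration between explicit ordered sub/super-solutions (the paper cites Smoller for this and does not spell it out, so your explicit $\underline n = \epsilon\sum_k\phi_1(\cdot-a_k)$ and $\overline n = \mathbf 1_\Omega$ are welcome detail), and uniqueness by a sliding comparison anchored in the boundary asymptotics $m,n_+\asymp\delta^\alpha$ obtained from the Ros-Oton--Serra theory, which is precisely the paper's Proposition~\ref{proposition:shape}. Up to renormalizing $l_0=1/\tau^*$, your interior-contact contradiction and your final symmetrization are the paper's Subcase~a and its concluding step.

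The genuine gap is in item~(ii) of your uniqueness discussion, and it matches exactly the one subcase you do not address. When $\tau^*<1$ and $\inf_\Omega (m-\tau^* n_+)/\delta^\alpha = 0$ (which, as you implicitly argue via the asymptotics, must hold, since otherwise $\tau^*$ could be increased), the minimising sequence need not escape to infinity: it may converge to a boundary point in a \emph{bounded} region. Lattice translations plus Hölder compactness only recycle the problem — after re-centering, the limit sequence may still touch on $\partial\Omega$ rather than in the interior, so the claim that ``the touching occurs inside $\Omega$'' is not established. To rule out $\liminf_{x\to x_b}(m-\tau^* n_+)/\delta^\alpha = 0$ for a bounded boundary point $x_b$, one needs the fractional Hopf lemma applied to $w=m-\tau^* n_+\ge0$, which satisfies $(-\Delta)^\alpha w \ge c(x)\,w$ with bounded $c$ (checking this uses $\tau^*<1$ and $0\le m,n_+\le1$); this is Subcase~b in the paper's Case~2, relying on the Greco--Servadei Hopf lemma recalled in Section~2.2. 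Note that the Ros-Oton--Serra estimates give $\delta^\alpha$ behaviour for $m$ and $n_+$ \emph{separately}, but precisely do not exclude leading-order cancellation in the difference $m-\tau^*n_+$; that cancellation is what the Hopf lemma prevents, and without it your sliding argument does not close.
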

The existence is due to the negativity of the principal eigenvalue of the Dirichlet operator $(-\Delta)^\alpha - Id$ in $\Omega_0$ which allows to construct by an iterative method a stationary state (see \cite{Smoller} for more details). As for the uniqueness, the main step is to prove that thanks to the non-local character of the fractional Laplacian, any positive bounded stationary state behaves like 
\begin{equation}\label{def_delta}
\delta(x)^\alpha= \mathrm{dist}(x, \partial \Omega)^\alpha 1_{\Omega}(x).
\end{equation}
\textcolor{black}{Then, a} classical argument (see \cite{Berestycki81} and \cite{BHR}) relying on the maximum principle and the Hopf lemma provides the result. We should underline that the uniqueness is clearly due to the non-local character of the operator $(-\Delta)^\alpha$, and it does not hold in the case of a standard diffusion term ($\alpha=1$). A direct consequence of the existence of a stationary solution is
\begin{corollary}\label{lambda0negatif}
The principal eigenvalue \textcolor{black}{$\lambda_0$} of the Dirichlet operator $(-\Delta)^\alpha - Id$ in $\Omega$ is negative. 
\end{corollary}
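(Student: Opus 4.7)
The plan is to combine the existence of the positive, bounded, periodic stationary state $n_+$ provided by Theorem \ref{theorem_existence_uniqueness_stationnary_state} with the self-adjointness of the Dirichlet fractional Laplacian, in order to read off the sign of $\lambda_0$ from an explicit formula. Both $n_+$ (Theorem \ref{theorem_existence_uniqueness_stationnary_state}) and the principal eigenfunction $\phi_0$ are periodic with respect to the lattice generated by $\{a_{e_i}\}$; for $\phi_0$ this follows from the $\mathbb{Z}^d$-translation invariance of the spectral problem and simplicity of $\lambda_0$. I would therefore work on a fundamental cell $Q$ containing exactly one copy of $\Omega_0$.

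The first step is to establish the integration-by-parts identity
\begin{equation*}
\int_Q \phi_0(x)\,(-\Delta)^\alpha n_+(x)\,dx \;=\; \int_Q n_+(x)\,(-\Delta)^\alpha \phi_0(x)\,dx.
\end{equation*}
Using the periodicity of $n_+$, one has $(-\Delta)^\alpha n_+(x) = C_\alpha \, PV \int_Q (n_+(x)-n_+(y))\,K(x,y)\,dy$, where $K(x,y) := \sum_{k \in \mathbb{Z}^d} |x-y-a_k|^{-(d+2\alpha)}$. The relation $a_{k+e_i}-a_k = a_{e_i}$ yields $a_{-k}=-a_k$, so $K$ is symmetric in $(x,y)$. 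Applying Fubini on each side of the identity reduces both to
\begin{equation*}
\tfrac{C_\alpha}{2}\iint_{Q \times Q} (n_+(x)-n_+(y))(\phi_0(x)-\phi_0(y))\,K(x,y)\,dx\,dy,
\end{equation*}
which proves the claim. Integrability is not an issue since $n_+,\phi_0,\ (-\Delta)^\alpha n_+ = n_+(1-n_+)$ and $(-\Delta)^\alpha \phi_0 = (1+\lambda_0)\phi_0$ are bounded and $Q$ has finite measure.

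The second step is just algebra: substituting the equations of $n_+$ and $\phi_0$ into the identity gives
\begin{equation*}
\int_Q \phi_0(n_+ - n_+^2)\,dx \;=\; (1+\lambda_0)\int_Q n_+\phi_0\,dx,
\end{equation*}
hence $\lambda_0 \int_Q n_+\phi_0\,dx = -\int_Q n_+^2\phi_0\,dx$. Since $\phi_0 > 0$ and $n_+ > 0$ in $\Omega$, both integrals are strictly positive, so $\lambda_0 < 0$.

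The main obstacle is the periodic integration-by-parts identity: one needs to justify the exchange of the periodic summation with the principal-value integral and to control the diagonal singularity of the summed kernel $K$. Both are standard once the symmetry of $K$ and the regularity of $n_+, \phi_0$ (bounded, regular in the interior, vanishing like $\delta^\alpha$ near $\partial \Omega$ as in \eqref{def_delta}) are used; the rest of the argument is a one-line cancellation.
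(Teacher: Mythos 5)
Your argument is correct and gives a cleaner, more quantitative conclusion than the paper needed. The paper does not write out a proof; it simply labels the corollary ``a direct consequence of the existence of a stationary solution,'' which most naturally points to a comparison-type argument (take $\varepsilon\phi_0$ small beneath $n_+$, which is possible since Proposition~\ref{proposition:shape} gives both the $\delta^\alpha$ behaviour, and show that $\lambda_0\geq 0$ contradicts the strict subsolution inequality $(-\Delta)^\alpha n_+ - n_+ = -n_+^2<0$ via the maximum principle/Hopf lemma), or else to the domain monotonicity $\lambda_0\leq\lambda_1<0$ coming from~\eqref{H1}. You instead exploit self-adjointness in the periodic setting, and the key computational insight — that the translation relation $a_{k+e_i}-a_k=a_{e_i}$ forces $a_0=0$ and hence $a_{-k}=-a_k$, so the periodized kernel $K(x,y)=\sum_k |x-y-a_k|^{-(d+2\alpha)}$ is symmetric — is verified correctly. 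Together with the periodicity of $\phi_0$ (which indeed follows from translation invariance and simplicity, Krein--Rutman being applied here on the period cell) and of $n_+$ (Theorem~\ref{theorem_existence_uniqueness_stationnary_state}), this legitimately reduces both sides to the symmetric Dirichlet form on $Q$, and you correctly read off
\[
\lambda_0 = -\frac{\displaystyle\int_Q n_+^2\,\phi_0}{\displaystyle\int_Q n_+\,\phi_0}<0,
\]
using $n_+>0$ and $\phi_0>0$ in $\Omega$. The only place where care is needed is the integration-by-parts step itself: the summed kernel inherits the diagonal singularity $|x-y|^{-(d+2\alpha)}$ and both $n_+$ and $\phi_0$ are merely $C^\alpha$ up to $\partial\Omega$ (behaving like $\delta^\alpha$), so one should invoke that both functions lie in the energy space $H^\alpha$ (on the torus), which is exactly where the Ros-Oton--Serra regularity estimates place them; with that the symmetrization and Fubini are routine. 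Your route has the payoff of an explicit formula for $\lambda_0$ in terms of $n_+$, something the maximum-principle route does not produce.
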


Once we have established a unique candidate to be the limit of $n(x,t)$ as $t$ tends to $+\infty$, we prove the invasion phenomena. First, we prove that starting from
\begin{equation}\label{H2}
\tag{H2}
n_0 \in C_0^\infty (\Omega) \cap C_c (\mathbb{R}^d) \quad \text{ and } \quad n_0\not\equiv 0
\end{equation}
the solution has algebraic tails at time $t=1$. To prove it, we provide an estimate of the heat kernel at time $t=1$ for a general multi-dimensional domain which satisfies the uniform interior and exterior ball condition:
\textcolor{black}{\begin{definition}[The uniform interior and exterior ball condition]
A set $\mathcal{O} \subset \mathbb{R}^d$ with $d \geq 1$ satisfies the uniform interior and exterior ball condition if there exists $r_1>0$ such that 
\begin{align*}
&\forall x \in \partial \mathcal{O}, \ \exists y_x \in \mathcal{O} \text{ such that } x \in \partial B(y_x, r_1) \text{ and } B(y_x, r) \subset \mathcal{O}, \\
\text{ and } & \forall z \in \mathcal{O}^c, \ \exists y_z \in \mathcal{O}^c \text{ such that } z \in B(y_z, r_1)  \text{ and } B(y_z, r) \subset \mathcal{O}^c. 
\end{align*}
\end{definition}}
\begin{theorem}\label{heat}
Let $\mathcal{O}$ be a smooth domain of $\mathbb{R}^d$ with $d \geq 1$ satisfying the uniform interior and excterior ball condition.
If we define $p$ as the solution of the following equation 
\begin{equation}\label{app_eq_1}
\left\lbrace                       
\begin{aligned}
&\partial_t p (x,t)+ (-\Delta)^\alpha p(x,t) =0 && \text{ for all } (x,t) \in  \mathcal{O}\times ]0,+\infty[,\\
& p(x,t)=0 && \text{ for all } (x,t) \in  \mathcal{O}^c \times [0,+\infty[,\\
&p(x,t=0)=n_0(x) \in C^\infty_0(\mathcal{O}, \mathbb{R}^+) \cap C^0_c(\mathbb{R}^d),
\end{aligned}
\right.
\end{equation}
then there exists $c>0$ and $C>0$ such that for all $x \in \mathcal{O} $, 
\begin{equation}
\frac{c \times  \delta(x)^\alpha }{1+|x|^{d+2\alpha}} \leq p(x,t=1) \leq \frac{C\times \delta(x)^\alpha}{1+|x|^{d+2\alpha}}.
\end{equation}
\end{theorem}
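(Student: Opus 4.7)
The plan is to express $p(x,1)$ via the Dirichlet heat kernel of $(-\Delta)^\alpha$ on $\mathcal{O}$, which I denote $p_D(t,x,y)$. Writing
\[
p(x,1) = \int_{\mathcal{O}} p_D(1,x,y)\, n_0(y)\, dy,
\]
the problem reduces to a pointwise estimate on the kernel. The key input is the sharp two-sided heat kernel bound of Chen--Kim--Song for the Dirichlet fractional Laplacian on $C^{1,1}$ open sets, which applies under the uniform interior and exterior ball condition: at $t=1$, there exist $c_1,C_1>0$ such that for every $x,y\in\mathcal{O}$,
\[
c_1\, B(x,y) \,\leq\, p_D(1,x,y) \,\leq\, C_1\, B(x,y), \quad B(x,y) := \min(1,\delta(x)^\alpha)\,\min(1,\delta(y)^\alpha)\,\min\!\Bigl(1,\tfrac{1}{|x-y|^{d+2\alpha}}\Bigr).
\]

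To pass from the kernel bound to the claimed estimate, I would exploit the hypothesis $n_0\in C^\infty_0(\mathcal{O})\cap C^0_c(\mathbb{R}^d)$: its support $K$ is a compact subset of $\mathcal{O}$, so there exist constants $0<r_0\leq R_0<\infty$ with $\delta(y)\geq r_0$ and $|y|\leq R_0$ for every $y\in K$. Hence $\min(1,\delta(y)^\alpha)$ is bounded above and below by positive constants on $K$ and can be absorbed into the multiplicative constants. For the upper bound I would split into the regions $|x|\leq 2R_0$ and $|x|>2R_0$: on the first region the factor $1+|x|^{d+2\alpha}$ is bounded, so the claim reduces to $p(x,1)\leq C\,\delta(x)^\alpha$, which follows from the kernel estimate together with $\min(1,\delta(x)^\alpha)\leq\delta(x)^\alpha$; on the second region $|x-y|\geq |x|/2$ for all $y\in K$, so
\[
p(x,1) \,\leq\, C\,\min(1,\delta(x)^\alpha)\int_K \frac{n_0(y)\,dy}{|x-y|^{d+2\alpha}} \,\leq\, \frac{C'\,\delta(x)^\alpha}{1+|x|^{d+2\alpha}}.
\]
The lower bound is obtained symmetrically: I would pick a compact $K'\subset K$ on which $n_0\geq\eta>0$, restrict the integral to $K'$, and combine $|x-y|\leq |x|+R_0$ with the lower kernel estimate.

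The main obstacle is the sharp two-sided estimate for $p_D$ itself. Rather than merely citing Chen--Kim--Song, one can prove both directions by potential-theoretic or probabilistic arguments. The upper bound combines the free-space estimate $p_{\mathbb{R}^d}(1,x,y)\leq C\min(1,|x-y|^{-(d+2\alpha)})$ with a boundary decay factor derived from the survival probability of an isotropic $2\alpha$-stable process inside the interior ball $B(y_x,r_1)$, which scales like $\delta(x)^\alpha$ thanks to the explicit Poisson kernel on a ball. The lower bound uses the strong Markov property together with the exterior ball condition to propagate this boundary decay to $p_D$, yielding the matching factor $\delta(x)^\alpha$ and the same polynomial tail in $|x-y|$. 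I expect this fine kernel analysis, rather than the subsequent integration against $n_0$, to be the technical heart of the theorem.
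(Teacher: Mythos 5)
Your proposal is correct but takes a genuinely different route from the paper, and it is in fact the route the paper explicitly acknowledges and chooses to avoid. You invoke the sharp two-sided Chen--Kim--Song estimate for the Dirichlet heat kernel $p_D(t,x,y)$ on a $C^{1,1}$ open set, specialize to $t=1$, and then integrate against the compactly supported $n_0$, splitting according to whether $|x|$ is of order the support radius or much larger; the reduction from the kernel estimate to the stated bound is straightforward and your sketch of it is sound. The paper, by contrast, never forms the heat kernel at all and works directly with the solution $p$. It produces the lower bound deterministically at time $t=1$ by: (1) a sub-solution argument via the principal eigenfunction of a ball to get a uniform positive lower bound on a neighbourhood of the support of $n_0$; (2) splitting the fractional Laplacian into a truncated operator $L^\alpha$ with jumps confined to $B(0,\nu)$ and a remainder whose contribution from the region where $p$ is already bounded below produces a forcing term $\gtrsim (1+|x|^{d+2\alpha})^{-1}$; (3) solving the resulting forced equation for $L^\alpha$ locally via its principal eigenfunction to obtain the interior estimate away from $\partial\mathcal{O}$; and (4) a barrier function adapted to $L^\alpha$ to recover the factor $\delta(x)^\alpha$ near the boundary. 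The upper bound is simply cited as standard. What each route buys: your approach is shorter, relies on an externally proved sharp result, and yields the full two-sided kernel bound for all $t>0$; the paper's approach is self-contained, purely PDE/comparison-principle based, and more elementary, at the cost of being valid only for fixed finite time and being less general than the probabilistic estimates. One caveat applicable to both arguments: the stated inequality implicitly requires $\delta$ to be bounded on $\mathcal{O}$ (as it is for the periodic $\Omega$ of the application); for an unbounded-$\delta$ domain such as a half-space the lower bound as written would fail, so both you and the paper are tacitly using this.
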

Once Theorem \ref{heat} is established, we are able to state the main result of the paper. 
\textcolor{black}{\begin{theorem} \label{theorem_convergence_1}
Assume \eqref{H1} and \eqref{H2}. Then for all $\mu >0$ there exists a time $t_\mu>0$ such that:\\
(i) for all $ c < \frac{|\lambda_0|}{d+2\alpha}$ and all $(x,t) \in \left\lbrace |x| < e^{ct }\right\rbrace \times ]t_\mu, +\infty[$
\[ |n(x,t) - n_+(x) | \leq \mu.\]
(ii) for all $ C > \frac{|\lambda_0|}{d+2\alpha}$ and all $(x,t) \in \left\lbrace |x| > e^{Ct} \right\rbrace \times ]t_\mu, +\infty[$
\[ |n(x,t) | \leq \mu.\]
\end{theorem}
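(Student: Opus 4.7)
The plan is to bracket $n$ between explicit sub- and super-solutions whose spatial profile is $\phi_0(x)\rho(x)$, where $\phi_0$ is the principal eigenfunction of $(-\Delta)^\alpha-\mathrm{Id}$ on $\Omega$ (which carries the time growth rate $|\lambda_0|$) and $\rho(x)\sim (1+|x|)^{-(d+2\alpha)}$ carries the non-local spreading. The target bound is $n(x,t)\asymp \phi_0(x)\rho(x)\,e^{|\lambda_0|(t-1)}$ before saturation; its level sets lie on $|x|^{d+2\alpha}\sim e^{|\lambda_0|t}$, i.e.\ $|x|\sim e^{|\lambda_0|t/(d+2\alpha)}$, matching the announced speed.

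\textbf{Upper bound (part (ii)).} I will first apply Theorem \ref{heat} with $\mathcal{O}=\Omega$ and the pointwise inequality $0\leq n(1-n)\leq n$ to obtain $n(x,1)\leq C\,\delta(x)^\alpha/(1+|x|^{d+2\alpha})$. Since $n(1-n)\leq n$, $n$ is a sub-solution of the linear Dirichlet equation $\partial_t w+(-\Delta)^\alpha w=w$ in $\Omega$, so it is enough to produce a super-solution of this linear equation dominating $n(\cdot,1)$. The ansatz I would try, for a small $\eta>0$, is $\bar n(x,t)=M\,e^{(|\lambda_0|+\eta)(t-1)}\phi_0(x)\rho(x)$. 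Using $(-\Delta)^\alpha\phi_0=(1+\lambda_0)\phi_0$ and
\[(-\Delta)^\alpha(\phi_0\rho)(x)=(1+\lambda_0)\phi_0(x)\rho(x)+C_\alpha\int_{\Omega}\phi_0(y)\,\frac{\rho(x)-\rho(y)}{|x-y|^{d+2\alpha}}\,dy,\]
the super-solution inequality reduces to showing that the remainder integral is controlled by $\eta\phi_0\rho$. For $|x|>e^{Ct}$ with $C>(|\lambda_0|+\eta)/(d+2\alpha)$ this would give $n(x,t)\leq \bar n(x,t)\to 0$, and letting $\eta\to 0$ recovers the sharp condition $C>|\lambda_0|/(d+2\alpha)$.

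\textbf{Lower bound (part (i)).} Symmetrically, I would construct a sub-solution $\underline n$ of the full nonlinear equation of logistic type,
\[\underline n(x,t)=\frac{A\phi_0(x)\rho(x)\,e^{(|\lambda_0|-\eta)(t-1)}}{1+A\phi_0(x)\rho(x)\,e^{(|\lambda_0|-\eta)(t-1)}},\]
whose verification rests on the same remainder estimate for $(-\Delta)^\alpha(\phi_0\rho)$ in the opposite direction together with $n(1-n)\geq n-n^2$. Theorem \ref{heat} allows me to fit $\underline n(\cdot,1)\leq n(\cdot,1)$. In the inner region $|x|<e^{ct}$ with $c<(|\lambda_0|-\eta)/(d+2\alpha)$, the argument $A\phi_0(x)\rho(x)\,e^{(|\lambda_0|-\eta)t}$ tends to $+\infty$, so $\underline n$ eventually exceeds $\varepsilon\phi_0$. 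Once $n\geq\varepsilon\phi_0$ is secured, a standard KPP monotonicity argument---iterating the semigroup from the static sub-solution $\varepsilon\phi_0$ (valid for $\varepsilon$ small) and from the super-solution $1$, and invoking the uniqueness of $n_+$ (Theorem \ref{theorem_existence_uniqueness_stationnary_state})---forces $n(x,t)\to n_+(x)$. The periodicity of $\Omega$, $\phi_0$ and $n_+$ allows this local convergence to be transported to every lattice cell inside $\{|x|<e^{ct}\}$, and sending $\eta\to 0$ recovers the sharp speed $c<|\lambda_0|/(d+2\alpha)$.

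\textbf{Main obstacle.} The technical crux is the non-local estimate
\[(-\Delta)^\alpha(\phi_0\rho)(x)=(1+\lambda_0)\phi_0(x)\rho(x)+\mathrm{Rem}(x),\qquad |\mathrm{Rem}(x)|\leq\eta\,\phi_0(x)\rho(x),\]
which must hold uniformly in $x\in\Omega$. This will require a careful splitting of the singular kernel into a near-field part, where $\rho$ is slowly varying and the eigenfunction equation for $\phi_0$ gives the leading contribution, and a far-field part controlled by the algebraic decay of $\rho$; the matching of the two is delicate because $\phi_0$ is only periodic and hence does not decay. A second subtle point is promoting the local convergence $n\to n_+$ to convergence on the exponentially expanding set $\{|x|<e^{ct}\}$, for which the $\mathbb{Z}^d$-periodic symmetry of the problem and the uniqueness of $n_+$ are both essential.
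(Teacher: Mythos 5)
Your plan is essentially the Cabr\'e--Coulon--Roquejoffre ansatz $\phi_0\,\rho$, the first of the two strategies the paper itself discusses (and deliberately sets aside in favour of the M\'el\'eard--Mirrahimi rescaling). The route is legitimate in principle, and you correctly identify that the whole burden rests on the remainder estimate
\[
(-\Delta)^\alpha(\phi_0\rho)(x)=(1+\lambda_0)\phi_0(x)\rho(x)+\mathrm{Rem}(x),
\qquad |\mathrm{Rem}(x)|\le\eta\,\phi_0(x)\rho(x)\quad\text{in }\Omega .
\]
But that estimate is not merely ``delicate'': in the Dirichlet setting of this paper it is \emph{false} up to the boundary, and this failure is precisely the main new difficulty the paper has to overcome. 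As $x\to\partial\Omega$ the right-hand side goes to zero like $\delta(x)^\alpha$, while
\[
\mathrm{Rem}(x)=C_\alpha\int_{\Omega}\phi_0(y)\,\frac{\rho(x)-\rho(y)}{|x-y|^{d+2\alpha}}\,dy
\]
does \emph{not} vanish at $\partial\Omega$ (the numerator $\rho(x)-\rho(y)$ has no reason to cancel there). So the ratio $\mathrm{Rem}/(\phi_0\rho)$ blows up near the boundary, and both your super-solution inequality and the logistic sub-solution inequality are violated in a boundary layer. This is not a matter of sharper matching of near- and far-fields; you need an extra idea. The paper's fix is a small domain perturbation: for the sub-solution it replaces $\phi_0$ by $\phi_\nu+\varepsilon$, where $\phi_\nu>0$ is the eigenfunction of the \emph{shrunken} domain $\Omega_\nu$ and the additive $\varepsilon$ keeps the profile bounded below on $\Omega_\nu$; for the super-solution it uses $\phi_{-\nu}$, the eigenfunction of the \emph{enlarged} domain $\Omega_{-\nu}$, which is bounded away from zero on $\overline\Omega$. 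The rates $\lambda_{\pm\nu}$ are then sent to $\lambda_0$ via the continuity/monotonicity of $\nu\mapsto\lambda_\nu$ (Proposition 6), recovering the sharp exponent. Without something of this kind your construction does not close.

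A second gap is in the ``interior estimate'' part of your lower bound. Securing $n\ge\varepsilon\phi_0$ on $\{|x|<e^{ct}\}$ and then invoking ``standard KPP monotonicity together with uniqueness'' gives pointwise convergence, but the theorem asks for a \emph{single} time $t_\mu$ valid on the exponentially expanding set, uniformly over infinitely many cells and uniformly up to the (shrinking) distance $\nu_1$ from $\partial\Omega$. Periodicity alone does not immediately give this, because the cells enter the region $\{|x|<e^{ct}\}$ at staggered times and the starting data $\varepsilon\phi_0$ vanishes near each cell boundary. The paper handles this by splitting off a boundary strip (where $|n-n_+|\le C\delta^\alpha\le\mu$ already), approximating $n_+$ by truncations $n_{M,+}$ on balls, and running the flow from fixed bumps $\sigma\mathbf{1}_{B(z,\nu/4)}$ with $z$ in a \emph{compact} cell $\overline{\Omega}_{0,\nu}$, using compactness to get a uniform waiting time. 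You should make an analogous argument explicit; as written it is asserted rather than proved.

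In short: the ansatz $\phi_0\rho$ is the right shape, but you must perturb to eigenfunctions on $\Omega_{\pm\nu}$ (or add a positive constant) to damp the boundary layer where $\phi_0\to 0$, and you must supply a compactness argument to make the interior convergence uniform on the growing set.
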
}
We detail the general strategy to prove Theorem \ref{theorem_convergence_1} in the next section.

\subsection{Discussion on the main results}


\textcolor{black}{Theorem \ref{heat} is an application of general results about the fractional Dirichlet heat kernel estimates given for instance in \cite{Dirichlet-Heat-Kernel} or in \cite{Bogdan}. Both of the two cited articles use a probabilistic approach. We propose in this work a deterministic proof of the lower bound of the fractional Dirichlet kernel estimates. Our proof is quite simple but the result is not as general as those presented in \cite{Dirichlet-Heat-Kernel} and \cite{Bogdan}. \textcolor{black}{In particular, it is only valid for finite time.} It relies on a well adapted decomposition of the fractional Laplacian. We do not provide the proof of the upper bound of the fractional Dirichlet kernel estimates since there is no difficulties to obtain such bound. }

\bigbreak

\textcolor{black}{\noindent Theorem \ref{theorem_convergence_1} can be seen as a \textcolor{black}{generalisation} of the results of \cite{Coulon} or \cite{Leculier_1}. Indeed, if we study a non-local Fisher KPP equation in the whole domain $\mathbb{R}^d$ with a reaction term depending on a parameter such that the reaction term becomes more and more unfavorable in $\Omega^c$ then we recover Theorem \ref{theorem_convergence_1}. This is fully in the spirit of \cite{Guo-Hamel}. In fact, if we study the equation:
\begin{equation*}
\left\lbrace 
\begin{aligned}
&\partial_t n + (-\Delta)^\alpha n = \mu_\delta(x) n - n^2 && \text{ in } \mathbb{R}^d \times ]0, +\infty[, \\
& n(x, t= 0) = n_0(x),
\end{aligned}
\right.
\end{equation*}
with 
\begin{equation*}
\mu_\delta (x) = 
\left\lbrace 
\begin{aligned}
&1 && \text{ if } x \in \Omega, \\
&1 - (\delta + 1)\mathrm{dist}(x, \Omega) &&\text{ if } 0 < \mathrm{dist}(x, \Omega) \leq \frac{1}{\delta}, \\
&-\frac{1}{\delta} && \text{ if } \frac{1}{\delta} <  \mathrm{dist}(x, \Omega) .
\end{aligned}
\right.
\end{equation*}
Then, denoting by $\lambda_\delta$ the principal eigenvalue of the operator $((-\Delta)^\alpha - \mu_\delta)$ we claim that 
\begin{equation}
 \lambda_\delta \underset{ \delta \rightarrow 0}{\longrightarrow } \lambda_0.  
\end{equation}
\textcolor{black}{It is then possible to obtain the result of Theorem \ref{theorem_convergence_1} from such approximate problems in the spirit of \cite{Guo-Hamel}. Although we do not use such method, similar difficulties would arise to treat the problems with this approximation procedure. Our method can indeed be adapted to study those problems in a uniform way. }}

\section{Strategy, comparison tools and outline of the paper}

\subsection{The general strategy}

\textcolor{black}{The general strategy to establish the results of Theorem \ref{theorem_convergence_1} is the following:
\bigbreak
\indent A- \textcolor{black}{Identify} the unique candidate to be the limit. This is the content of Theorem \ref{theorem_existence_uniqueness_stationnary_state}. \\
\indent B- Starting from a compactly supported initial data, the solution $n$ has algebraic tails immediatly after $t=0$. This is the content of Theorem \ref{heat}. \\
\indent C- \textcolor{black}{Establish} a sub and a super-solution which bound the solution $n$ from below and above. \\
\indent D- Use the sub-solution to "push" the solution $n$ to the unique non-trivial stationary state $n_+$ in $\left\lbrace |x| < e^{\frac{|\lambda_0|t}{d+2\alpha}} \right\rbrace$ and use the super-solution to "crush" the solution $n$ to $0$ in $\left\lbrace |x| > e^{\frac{|\lambda_0|t}{d+2\alpha}} \right\rbrace$.}

\bigbreak

\textcolor{black}{The proof of C can be done with two different approaches. The first one is introduced in \cite{Cabre-Coulon-Roquejoffre} by Cabr\'{e}, Coulon and Roquejoffre. The idea is to consider the quantity
\[v(x,t) =\phi_0(r(t)x)^{-1} n(r(t)x, t)\]
where the eigenfunction $\phi_0$ is introduced in \eqref{lambda0} and $r(t)$ decreases exponentially fast. Next, \textcolor{black}{the problem can be formally reduced to a transport equation leading to the fact that $v$ is of the form} $\frac{\phi_0(x)}{1+b(t)|x|^{d+2\alpha}}$. The idea is therefore to look for a sub-solution $\underline{v}$ and a supersolution $\overline{v}$ of the form 
\[\underline{v}(x,t) = \frac{\underline{a}\phi_0(x)}{1+\underline{b}(t)|x|^{d+2\alpha}} \quad \text{ and } \quad \overline{v}(x,t) = \frac{\overline{a}\phi_0(x)}{1+\overline{b}(t)|x|^{d+2\alpha}}\]   
(where the positive constants $\underline{a}, \overline{a}$ and the function $\underline{b}, \overline{b}$ have to be adjusted). }


\textcolor{black}{The second approach is introduced in \cite{Mirrahimi1} by M\'{e}l\'{e}ard and Mirrahimi (in order to extend the singular perturbation approach of \textcolor{black}{\cite{Opt_geom_4} and \cite{Opt_geom_2}}, put to work in the \textcolor{black}{PDE} frame work in \cite{Opt_geom_1}). The main idea is to perform the following scaling on equation \eqref{equation_1}
\begin{equation}\label{scale}
(x,t) \mapsto \left(|x|^{\frac{1}{\varepsilon}} \frac{x}{|x|}, \frac{t}{\varepsilon} \right).
\end{equation}
The interest of this scaling is to catch the effective behavior of the solution. Indeed, this scaling lets invariant the set 
\[\mathcal{B}=\left\lbrace (x,t) \in \mathbb{R} \times \mathbb{R}^+ \  | \  (d+2\alpha) \log|x|< |\lambda_0|t  \right\rbrace\]
where $\lambda_0$ is defined by \eqref{lambda0}. Then, we look for sub/super-solutions on the form 
\[ \phi_0 (x) \times G(x,t)\]
where $G$ needs to be determined. Taking $G$ with an algebraic tail gives that, once the scaling is performed, the fractional Laplacian of $G$ vanishes as the parameter $\varepsilon$ tends to $0$. Therefore, the sub and super solutions are just perturbations of a simple ODE. }

\textcolor{black}{We choose the second method because it explains the main role of the fractional Laplacian, namely to provide algebraic tails. Once this tails are obtained in part B, the role of the fractional Laplacian becomes negligible. \textcolor{black}{This means that the only role of the fractional Laplacian in determining the invasion speed is at initial time where it determines the algebraic tails of the solution. This is indeed very different from the classical Fisher-KPP equation where the diffusion not only determines the exponential tails of the solution but it also modifies the invasion speed in positive times (see \cite{Mirrahimi1}). This is why in the asymptotic study of the classical Fisher KPP equation, one obtains a Hamilton-Jacobi equation \cite{Opt_geom_1} while in the fractional KPP equation the limit is a simple ordinary differential equation.}}



\bigbreak 

\textcolor{black}{The proof of D can be achieved with the rescaled solution $n\left(|x|^{\frac{1}{\varepsilon}}\dfrac{x}{|x|}, \dfrac{t}{\varepsilon}\right)$ using the method of perturbed test functions from the theory of viscosity solutions and homogenization (introduced by Evans in \cite{Evans_visco_1} and \cite{Evans_visco_2} and by Mirrahimi and M\'{e}l\'{e}ard in \cite{Mirrahimi1} for the fractional Laplacian). Since the proof is technical, long and not easy to grasp (the domain moves also with the parameter $\varepsilon$), we prefer to drop the scaling and to perform the inverse scaling on our sub and super solutions. Therefore, we provide a direct proof of D by adapting the proof of Theorem 1.6 in \cite{Coulon}. In this proof, the author proves thanks to a subsolution  that there exists $\sigma>0$ and $t_\sigma>0$ such that
\[\sigma < \underset{ (x,t) \in \left\lbrace |x| < e^{\frac{(|\lambda_0|-\delta)t}{d+2\alpha}}  \right\rbrace \times ]t_\sigma, +\infty[ }{\inf} n(x,t).\]
This last claim is obviously false in our case \textcolor{black}{since the solution vanishes on the boundary}. This is the main new difficulty that we will encounter. We overcome it by establishing the same kind of estimates away from the boundary.}
\subsection{The comparison tools and some notations}

\textcolor{black}{All along the article, we will use many times the comparison principle. We recall here what we mean by comparison principle. 
\begin{theorem*}[The comparison principle]
Let $f$ be a smooth function, $a \in [0, +\infty[$ and $b \in ]0, +\infty]$. If $\underline{n}$ and $\overline{n}$ are such that 
\begin{align*}
&\forall (x,t) \in \Omega \times ]a, b[, \quad \partial_t \underline{n} + (-\Delta)^\alpha \underline{n} \leq f(\underline{n}), \quad &&\text{ and } \quad \partial_t \overline{n} + (-\Delta)^\alpha \overline{n} \geq f (\overline{n}), \\
& \forall (x,t) \in \Omega^c \times ]a,b[, \quad \underline{n} (x,t) \leq \overline{n}(x,t), \quad &&\text{ and }  \quad \forall x \in \mathbb{R}^d, \quad \underline{n}(x,t=a) \leq \overline{n}(x,t=a)
\end{align*}
then 
\[\forall (x,t) \in \Omega \times ]a, b[, \quad \underline{n}(x,t) \leq \overline{n}(x,t).\]
\end{theorem*}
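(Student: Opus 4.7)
The plan is a weighted parabolic maximum-principle argument applied to the difference $w := \overline{n} - \underline{n}$. Subtracting the two differential inequalities and writing
\begin{equation*}
f(\overline{n}) - f(\underline{n}) = c(x,t)\,w, \qquad c(x,t) := \int_0^1 f'\!\bigl(\theta\overline{n} + (1-\theta)\underline{n}\bigr)\,d\theta,
\end{equation*}
gives $\partial_t w + (-\Delta)^\alpha w - c\,w \geq 0$ on $\Omega\times(a,b)$, with $w(\cdot,a)\geq 0$ on $\mathbb{R}^d$ and $w\geq 0$ on $\Omega^c\times(a,b)$. All applications of the principle in this paper involve bounded $\underline{n}, \overline{n}$, so $M := \|c\|_\infty < \infty$. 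Setting $v := e^{-Kt} w$ for a constant $K$ to be chosen below converts the inequality into
\begin{equation*}
\partial_t v + (-\Delta)^\alpha v + (K-c(x,t))\,v \geq 0 \quad \text{on} \quad \Omega\times(a,b),
\end{equation*}
with the same sign conditions on $v$ at $t=a$ and on $\Omega^c$. The goal reduces to showing $v\geq 0$ on $\Omega\times(a,b)$.

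I would argue by contradiction. Assume $v(x^\ast,t^\ast)<0$ at some $(x^\ast,t^\ast)\in\Omega\times(a,b)$, fix any $T\in(t^\ast,b)$ (arbitrary if $b=+\infty$), and introduce the barrier
\begin{equation*}
v_\eta(x,t) := v(x,t) + \eta\bigl[\phi(x) + (t-a)\bigr], \qquad \phi(x) := (1+|x|^2)^\beta, \qquad \beta\in(0,\alpha),
\end{equation*}
for small $\eta>0$. The constraint $\beta<\alpha$ keeps $C := \|(-\Delta)^\alpha\phi\|_\infty$ finite; I then fix $K > M + C$. Because $v$ is bounded on $\mathbb{R}^d\times[a,T]$ and $\phi(x)\to+\infty$ as $|x|\to\infty$, the function $v_\eta$ attains its infimum on $\mathbb{R}^d\times[a,T]$. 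For $\eta$ small enough the value $v_\eta(x^\ast,t^\ast)$ is still negative, while the sign conditions force $v_\eta\geq 0$ at $t=a$ and on $\Omega^c\times[a,T]$. Hence the infimum is attained at some $(x_0,t_0) \in \Omega\times(a,T]$ with $v_\eta(x_0,t_0)<0$.

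At this global spatial minimum one has $\partial_t v_\eta(x_0,t_0)\leq 0$, and the integral representation of the fractional Laplacian combined with $v_\eta(x_0,t_0)\leq v_\eta(y,t_0)$ for every $y\in\mathbb{R}^d$ gives $(-\Delta)^\alpha v_\eta(x_0,t_0)\leq 0$. Substituting $v = v_\eta - \eta(\phi + (t-a))$ into the differential inequality yields
\begin{equation*}
\eta\bigl[1 + (-\Delta)^\alpha\phi(x_0) + (K-c(x_0,t_0))\bigl(\phi(x_0)+(t_0-a)\bigr)\bigr] \leq (K-c(x_0,t_0))\,v_\eta(x_0,t_0).
\end{equation*}
The right-hand side is strictly negative, whereas the bracketed quantity on the left is bounded below by $1 - C + (K-M) \geq 1$ by the choice of $K$, so the left-hand side is at least $\eta > 0$. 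This is the desired contradiction. Passage $T\to b$ then handles the case $b=+\infty$.

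The main obstacle — and the only place the argument is not entirely routine — is the localization of a minimum on the \emph{unbounded} periodic domain $\Omega$: without the polynomial barrier $\phi$ the infimum of $v$ need not be attained. The compatibility of $\phi$ with the non-local operator, enforced by $\beta<\alpha$ so that $(-\Delta)^\alpha\phi \in L^\infty(\mathbb{R}^d)$, is the essential non-routine ingredient; everything else is the standard parabolic weak maximum principle combined with the non-positivity of $(-\Delta)^\alpha$ at an interior global minimum, which holds precisely because the hypothesis $v\geq 0$ on $\Omega^c$ makes $(x_0,t_0)$ a global spatial minimum of $v_\eta$ on all of $\mathbb{R}^d$, not merely on $\Omega$.
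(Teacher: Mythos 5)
Your proof is correct. For the record, the paper does not prove this statement; in Section 2.2 it is presented only as a \emph{recalled} tool (``We recall here what we mean by comparison principle''), with no argument given. So there is nothing in the paper's proof to compare against, and any blind attempt is necessarily a fresh derivation. Your route --- linearize via $c(x,t)=\int_0^1 f'(\theta\overline{n}+(1-\theta)\underline{n})\,d\theta$, switch to $v=e^{-Kt}(\overline{n}-\underline{n})$ to make the zeroth-order coefficient positive, and force a global minimum into the interior with a barrier $\phi(x)=(1+|x|^2)^\beta$, $\beta\in(0,\alpha)$ --- is the standard and, I think, cleanest way to do this in an unbounded domain with a non-local operator. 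The one non-routine ingredient, as you point out, is the compatibility condition $\beta<\alpha$, which both makes $(-\Delta)^\alpha\phi$ a bounded function and keeps the principal-value integral defining $(-\Delta)^\alpha v_\eta$ convergent despite the subpolynomial growth of the barrier; the choice $K>M+C$ then turns the minimum-point estimate into a strict contradiction rather than a soft limit. Two small caveats worth being explicit about if you were to include this as a lemma: you use (as the paper tacitly does) that $\underline{n},\overline{n}$ are bounded and smooth enough for the equation to hold pointwise, and that $v$ is continuous on $\mathbb{R}^d\times[a,T]$ so the infimum of $v_\eta$ on a compact sublevel set is attained. Both hold in every application the paper makes.
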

In the same spirit, we recall the fractional Hopf Lemma stated in \cite{Hopf_lemma_Greco}. 
\begin{lemma*}[The fractional Hopf Lemma \cite{Hopf_lemma_Greco}]
Let $\mathcal{O} \subset \mathbb{R}^d$ be an open set satisfying the uniform interior and exterior ball condition at $z \in \partial \mathcal{O}$ and let $c \in L^\infty(\mathcal{O})$. Consider a positive lower semi-continuous function $u: \mathbb{R}^d \mapsto \mathbb{R}$ satisfying $(-\Delta)^\alpha u \geq c(x) u$ point-wise in $\Omega$. Then, either $u$ vanishes identically in $\Omega$, or there holds 
\[ \underset{x \in \Omega}{\underset{x  \mapsto z}{\liminf} }\  \frac{u(x)}{\delta(x)^\alpha} >  0.\] 
\end{lemma*}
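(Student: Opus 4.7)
The plan is to combine the fractional strong maximum principle with a Poisson-kernel comparison on a small interior tangent ball at $z$. First I would dispose of the dichotomy: if $u\not\equiv 0$ in $\mathcal{O}$, then $u>0$ strictly in $\mathcal{O}$. Indeed, at any interior zero $x_0\in\mathcal{O}$ of $u$, the representation
\[
(-\Delta)^\alpha u(x_0) \;=\; -C_\alpha\int_{\mathbb{R}^d}\frac{u(y)}{|x_0-y|^{d+2\alpha}}\,dy
\]
combined with the pointwise inequality $(-\Delta)^\alpha u(x_0)\geq c(x_0)u(x_0)=0$ and the non-negativity of $u$ forces $u\equiv 0$ almost everywhere, which contradicts $u\not\equiv 0$ together with the lower semi-continuity of $u$.

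Next I would localize near $z$. By the uniform interior ball condition, select the interior tangent ball $B_{r_1}(y_z)\subset\mathcal{O}$ with $z\in\partial B_{r_1}(y_z)$. Lower semi-continuity and strict positivity of $u$ on the compact set $\overline{B_{r_1/2}(y_z)}\subset\mathcal{O}$ then provide $\eta>0$ with $u\geq\eta$ on that set. I then introduce a smaller tangent ball $B'=B(y',r')\subset B_{r_1}(y_z)$ at $z$, of radius $r':=r_1/4$, positioned along the inward normal at $z$ so that $B'\cap\overline{B_{r_1/2}(y_z)}=\emptyset$. Setting $M:=\|c\|_{L^\infty}$, the working inequality is upgraded to
\[
\mathcal{L}_M u \;:=\; (-\Delta)^\alpha u + M u \;\geq\; (c+M)u\;\geq\; 0\qquad\text{in }\mathcal{O}.
\]

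Finally, I would compare $u$ with the $\mathcal{L}_M$-harmonic extension $\tilde u$ of itself on $B'$ (that is, $\mathcal{L}_M\tilde u=0$ in $B'$, $\tilde u=u$ outside $B'$). Since $\mathcal{L}_M=(-\Delta)^\alpha+M$ has strictly positive first Dirichlet eigenvalue on any bounded set (the eigenvalue of $(-\Delta)^\alpha$ is non-negative and $M\geq 0$), the weak maximum principle applies to $u-\tilde u$ and yields $u\geq\tilde u$ in $B'$. Representing $\tilde u$ via its Poisson kernel, $\tilde u(x)=\int_{(B')^c}P^M_{B'}(x,y)\,u(y)\,dy$, and using the lower bound $P^M_{B'}(x,y)\gtrsim (r'^2-|x-y'|^2)^\alpha$ for $x\in B'$ and $y$ in the compact set $\overline{B_{r_1/2}(y_z)}$, one obtains $\tilde u(x)\geq C\eta\,\delta_{B'}(x)^\alpha$. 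Combined with $\delta_{B'}(x)=\delta(x)$ for $x$ along the inward normal at $z$, this gives $\liminf_{x\to z}u(x)/\delta(x)^\alpha>0$. The main technical obstacle is exactly this Poisson-kernel lower bound for the perturbed operator $\mathcal{L}_M$: in the unperturbed case $M=0$ the kernel is given explicitly by the Riesz-Getoor formula and exhibits precisely the $\delta^\alpha$ boundary asymptotics, while for $M>0$ one recovers the bound either through a probabilistic (Feynman-Kac) representation or through a Neumann-series perturbation that converges as soon as $M$ is dominated by the first Dirichlet eigenvalue of $(-\Delta)^\alpha$ on $B'$, a condition secured by taking $r'$ small enough.
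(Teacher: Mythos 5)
The paper does not prove this lemma --- it is stated as a recalled tool and attributed to \cite{Hopf_lemma_Greco} --- so there is no internal proof to compare against; I will assess your proposal on its own merits. The strict-positivity reduction, the localization on an interior tangent ball, and the comparison $u\geq\tilde u$ in $B'$ for $\mathcal{L}_M=(-\Delta)^\alpha+M$ with $M=\|c\|_\infty\geq 0$ are all correct. The unresolved step is exactly the one you flag, and the Neumann-series route you propose does not close it. Writing $h$ for the unperturbed $\alpha$-harmonic extension, the series gives at best $\tilde u\geq h-MG_{B'}h$; both terms scale like $((r')^2-|x-y'|^2)^\alpha$ near $\partial B'$, with $h(x)\geq c_1\eta\,((r')^2-|x-y'|^2)^\alpha$ (Poisson kernel of $B'$ integrated over $K$) and $MG_{B'}h(x)\leq M\|u\|_\infty\,\mathbb{E}_x[\tau_{B'}]=M\|u\|_\infty\,\gamma^{-1}((r')^2-|x-y'|^2)^\alpha$ (Getoor's torsion function), and after the dust settles the constants $c_1$ and $\gamma$ are essentially independent of $r'$. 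So taking $r'$ small enough that $M$ is below $\lambda_1((-\Delta)^\alpha,B')$ makes the series converge but does not make the leading coefficient $c_1\eta-M\|u\|_\infty\gamma^{-1}$ positive when $\eta$ is small relative to $M\|u\|_\infty$. The Feynman--Kac route you also invoke does work: from $\tilde u(x)=\mathbb{E}_x\bigl[e^{-M\tau_{B'}}u(X_{\tau_{B'}})\bigr]\geq \eta e^{-MT}\mathbb{P}_x[\tau_{B'}\leq T,\,X_{\tau_{B'}}\in K]$ and $\mathbb{P}_x[\tau_{B'}>T]\leq\mathbb{E}_x[\tau_{B'}]/T=\gamma^{-1}((r')^2-|x-y'|^2)^\alpha/T$, one picks $T$ large and finishes; but this is fully probabilistic and considerably heavier than necessary.

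The standard (and, I believe, the cited) argument is a barrier, which avoids any kernel estimate. Let $\psi(x)=((r')^2-|x-y'|^2)_+^\alpha$, which satisfies $(-\Delta)^\alpha\psi\equiv\gamma$ in $B'$ for a dimensional constant $\gamma$, set $K=\overline{B_{r_1/2}(y_z)}$ and $\phi=\varepsilon\psi+\eta\,1_K$. Then $\phi\leq u$ on $(B')^c$, and for $x\in B'$,
\[
(-\Delta)^\alpha\phi(x)+M\phi(x)\;\leq\;\varepsilon\bigl(\gamma+M(r')^{2\alpha}\bigr)-\eta\,\kappa,\qquad \kappa:=\inf_{x\in B'}C_\alpha\!\int_K\frac{dy}{|x-y|^{d+2\alpha}}>0,
\]
which is $<0$ once $\varepsilon$ is small. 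If $w=\phi-u$ had a positive maximum $x^*\in B'$, that point would be a global maximum (since $w\leq 0$ off $B'$), hence $(-\Delta)^\alpha w(x^*)\geq 0$ and $Mw(x^*)>0$; but $(-\Delta)^\alpha w+Mw\leq(-\Delta)^\alpha\phi+M\phi-(c+M)u\leq(-\Delta)^\alpha\phi+M\phi<0$ in $B'$. This contradiction gives $u\geq\varepsilon\psi$, and $\psi\asymp\delta_{B'}^\alpha\asymp\delta^\alpha$ along the normal at $z$. I would replace the kernel comparison with this barrier; it is shorter and purely PDE-based. Two small points: with $r'=r_1/4$ the closures $\overline{B'}$ and $K$ are tangent at $z+\tfrac{r_1}{2}\nu$, so take $r'<r_1/4$ or shrink $K$ to get genuine separation before invoking any uniform kernel bound; and the harmonic extension $\tilde u$ is only defined under the standing integrability $\int u(y)(1+|y|)^{-d-2\alpha}\,dy<\infty$ implicit in ``$(-\Delta)^\alpha u$ point-wise''.
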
}

\bigbreak

All along the article, for any set $\mathcal{U}$ and any positive constant $\nu$, we introduce the following new sets :
\begin{equation}\label{setU}
\mathcal{U}_\nu = \left\lbrace x  \in  \mathcal{U} | \  \mathrm{dist}(x,\partial \mathcal{U})>\nu  \right\rbrace, \quad \mathcal{U}_{-\nu} = \left\lbrace x  \in  \mathbb{R}^d  | \  \mathrm{dist}(x, \mathcal{U})<\nu \right\rbrace.
\end{equation}
The constants denoted by $c$ or $C$ may change from one line to another when there is no confusion possible. Also, we drop the constant $C_\alpha$ and the Cauchy principal value $P.V.$ in front of the fractional Laplacian for better readability.

\subsection{Outline of the paper}

In section 3, we demonstrate Theorem \ref{theorem_existence_uniqueness_stationnary_state}. Next, section 4 is dedicated to the proof of Theorem \ref{heat}. The first part of section 5 introduces the scaling and provides the sub and super-solutions. Finally, the second part of section 5 is devoted to the proof of Theorem \ref{theorem_convergence_1}.


\section{Uniqueness of the stationary state $n_+$}

First, we state a proposition which gives the shape of any non-trivial bounded sub and super-solution to \eqref{equation_1_stat_1} near the boundary. Then, we use this result to prove the uniqueness result. Since the proof of the existence is classical we do not provide it. 

\begin{proposition}\label{proposition:shape}
\textit{(i)} If $u$ is a smooth positive bounded function such that $u(x)=0 $ for all $x \in \Omega^c$ and $(-\Delta)^\alpha u(x) \leq u(x)-u(x)^2$ for all $x \in \Omega$, then there exists $C >0$ such that for all $x \in \mathbb{R}^d$
\[ u(x)\leq C \delta(x)^\alpha.\]

\textit{(ii)} If $v$ is a smooth positive bounded function such that $v(x)=0 $ for all $x \in \Omega^c$, $(-\Delta)^\alpha v(x) \geq v(x)-v(x)^2$ for all $x \in \Omega$ and $v\not \equiv 0$ then there exists $c >0$ such that for all $x \in \mathbb{R}^d$ 
\[ c \delta(x)^\alpha \leq v(x) . \]
\end{proposition}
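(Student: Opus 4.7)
The plan is to exploit the non-local coupling between the components of $\Omega$ and the periodicity of the domain. The key tool is the following decomposition, valid for any function $w$ supported in $\Omega$ and any $x \in \Omega_0$:
\[
(-\Delta)^\alpha w(x) = (-\Delta)^\alpha_{\Omega_0}(w\,1_{\Omega_0})(x) - \int_{\Omega \setminus \Omega_0} \frac{w(y)}{|x-y|^{d+2\alpha}}\,dy,
\]
where $(-\Delta)^\alpha_{\Omega_0}$ denotes the regional Dirichlet fractional Laplacian on $\Omega_0$. Since the components $\Omega_0 + a_k$ are pairwise disjoint bounded translates of $\Omega_0$, they are separated by a strictly positive minimal distance, so the tail integral is bounded by $C\|w\|_\infty$ uniformly in $x \in \Omega_0$. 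By periodicity of $\Omega$, the same decomposition holds on every translate.

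For part (i), I would apply this decomposition to $u$: combined with the sub-solution hypothesis $(-\Delta)^\alpha u \leq u - u^2 \leq \|u\|_\infty$ it gives $(-\Delta)^\alpha_{\Omega_0}(u\,1_{\Omega_0}) \leq K$ on $\Omega_0$ for some constant $K$ depending only on $\|u\|_\infty$ and the minimal-distance geometry. Comparison with a multiple of the fractional torsion function on $\Omega_0$, whose boundary behaviour is of order $\delta^\alpha$ (a standard result on smooth bounded domains satisfying the uniform ball condition), then yields $u \leq C\delta^\alpha$ on $\Omega_0$. Periodicity propagates the same bound to every component, giving the global estimate.

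For part (ii), I would proceed in three steps. First, the non-local strong maximum principle forces $v > 0$ everywhere on $\Omega$: if $v(x_0) = 0$ at some interior point, then the supersolution condition at $x_0$ reads $0 \leq (-\Delta)^\alpha v(x_0) = -\int v(y)|x_0-y|^{-d-2\alpha}\,dy \leq 0$, so $v \equiv 0$, contradicting $v \not\equiv 0$. Importantly, this yields positivity on \emph{every} component, not just on the one initially supporting a positive value. Second, the inequality rewrites as $(-\Delta)^\alpha v \geq (1-v)v$ with bounded coefficient $1-v \in L^\infty$, so the fractional Hopf lemma recalled in Section 2.2 applies and gives $\liminf_{x\to z,\,x\in\Omega} v(x)/\delta(x)^\alpha > 0$ at each boundary point $z \in \partial\Omega$.

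The main obstacle is then the third step: promoting these pointwise Hopf estimates, a priori component-dependent, into a single uniform constant $c > 0$, since $v$ is not assumed periodic. I would attack this by using once again the decomposition above, which rewrites the supersolution inequality on each translate as
\[
(-\Delta)^\alpha_{\Omega_0+a_k}(v\,1_{\Omega_0+a_k})(x) \geq v(x) - v(x)^2 + R_k(x), \qquad R_k(x) = \int_{\Omega \setminus (\Omega_0+a_k)}\frac{v(y)}{|x-y|^{d+2\alpha}}\,dy \geq 0.
\]
Assumption \eqref{H1} ($\lambda_1 < 0$) guarantees the existence of a positive bounded stationary state $w^*$ of the single-component equation $(-\Delta)^\alpha_{\Omega_0}w = w - w^2$ on $\Omega_0$, and a direct argument using its principal eigenfunction and the Hopf lemma on the bounded set $\Omega_0$ gives $w^* \geq c_*\delta^\alpha$ with a constant depending only on $\Omega_0$. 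A sliding/sweeping argument based on sub-solutions of the form $c\,w^*(\cdot - a_k)$ then produces the uniform bound $v \geq c\delta^\alpha$ on each translate simultaneously. The crux, and the step I expect to require most care, is precisely this uniform sweeping: the non-periodicity of $v$ must be controlled by exploiting the positive source terms $R_k$, which prevent $v$ from degenerating on any component.
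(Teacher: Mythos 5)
Your part (i) is a valid alternative to the paper's argument. You compare $u\,1_{\Omega_0}$ against a multiple of the torsion function of $\Omega_0$, after writing $(-\Delta)^\alpha$ acting on $u$ at points of $\Omega_0$ as the Dirichlet operator applied to the zero-extension $u\,1_{\Omega_0}$ plus a tail of size $O(\|u\|_\infty)$ (one caveat: this decomposition is for the zero-extension operator, not the ``regional'' fractional Laplacian that integrates only over $\Omega_0$, for which your identity would fail). The paper instead uses the Ros-Oton--Serra annular barrier anchored at each exterior ball. Both routes deliver $u\le C\delta^\alpha$; yours is global on each component and avoids constructing the local barrier, a mild simplification worth keeping.

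For part (ii), steps 1 and 2 are fine (step 1 is identical to the paper's). The gap is step 3: you state the sliding but do not carry it through, and the worry you record about uniformity is exactly the place where the sliding, once set up, resolves itself. Fix $k$ and set $s^*_k=\sup\{s\in(0,1]:\ s\,w^*(\cdot-a_k)\le v\ \text{on}\ \Omega_0+a_k\}$. That $s^*_k>0$ follows, possibly with a terrible $k$-dependent constant, from $v>0$ on $\Omega_0+a_k$ plus Hopf on that single component; the $k$-dependence of the starting value is harmless. Suppose $s^*_k<1$ and set $h=v-s^*_k\,w^*(\cdot-a_k)\ge 0$; then $h\not\equiv 0$ since $v>0$ on every other component, where $w^*(\cdot-a_k)\equiv 0$. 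If $h$ touches $0$ at an interior $x_0\in\Omega_0+a_k$, then
\[
0>-\int_{\mathbb{R}^d}\frac{h(y)}{|x_0-y|^{d+2\alpha}}\,dy=(-\Delta)^\alpha h(x_0)\ge \bigl(v-v^2\bigr)(x_0)-s^*_k\bigl(w^*-(w^*)^2\bigr)(x_0-a_k)=s^*_k(1-s^*_k)\,w^*(x_0-a_k)^2>0,
\]
a contradiction; the strict negativity of the integral is precisely the nonlocal contribution of $v$ on the \emph{other} components, i.e.\ your $R_k$, and it is the only place the nonlocality enters. A contact along $\partial(\Omega_0+a_k)$ in the $\delta^\alpha$-ratio sense is excluded by the fractional Hopf lemma applied to $h$, since $(-\Delta)^\alpha h\ge c(x)h$ in $\Omega$ for some bounded $c$. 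Hence $s^*_k=1$ for every $k$, so $v\ge w^*(\cdot-a_k)$ on each translate and the constant of the statement is the Hopf constant of the single function $w^*$ on $\Omega_0$, with no $k$-dependence in sight. The paper reaches the same conclusion via a parabolic sweeping: it evolves the single-component Fisher--KPP flow from $v\,1_{\Omega_0+a_k}$, which stays below $v$ by comparison and converges to $w^*(\cdot-a_k)$ by the long-time convergence theorem of Roquejoffre. That version trades the contact/Hopf discussion for an invocation of uniqueness of the steady state, but uniformity comes from the same place in both arguments: the common limit $w^*$.
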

%
%

\begin{proof}[Proof of Proposition \ref{proposition:shape}]

\textit{Proof of (i).} Let $u$ be a continuous positive bounded function such that $u=0$ in $\Omega^c$ and $(-\Delta)^\alpha u \leq u-u^2$ in $\Omega$. Let $x$ be a point of the boundary. Let $z_x\in \mathbb{R}^d$ and $r_1>0$ be the elements provided by the uniform exterior ball condition such that 
\[ B(z_x, r_1) \subset \Omega^c  \quad \text{ and } \quad x \in \overline{B(z_x,r_1) \cap \partial \Omega } .\]
 We rescale and translate a barrier function (provided for instance in Annex B of \cite{Ros-Oton-Serra}).
This barrier function $\overline{\phi}$ satisfies the following properties: 
\begin{equation}
  \left\{
      \begin{aligned}
       & (-\Delta)^{\alpha} \overline{\phi}\geq 1 && \text{ in } B(z_x, 4r_1) \backslash B(z_x, r_1),\\
       &\overline{\phi} \equiv 0 && \text{ in } B(z_x,r_1),\\
       &0 \leq \overline{\phi} \leq C(|z_x - x|-r)^\alpha && \text{ in } B(z_x, 4r_1) \backslash B(z_x, r_1), \\
       & \max \ u \leq \overline{\phi} \leq C  &&\text{ in } \mathbb{R}^d \backslash B(z_x,4r_1).
      \end{aligned}
    \right. 
\end{equation}
\textcolor{black}{We prove that $u \leq \overline{\phi}$ in $\mathbb{R}^d$. }By construction we have $u \leq \overline{\phi} $ in $(B(z_x, 4r_1) \backslash B(z_x, r_1))^c$. 
Assume by contradiction that there exists $x_0 \in (B(z_x, 4r_1) \backslash B(z_x, r_1)) \cap \Omega$ such that $(\overline{\phi} -u)(x_0) <0$. Then, there exists $x_1 \in (B(z_x, 4r_1) \backslash B(z_x, r_1)) \cap \Omega$ such that $(\overline{\phi} -u)(x_1) = \underset{x \in \mathbb{R}^d}{\min} (\overline{\phi} -u)(x)<0$. Thus, we obtain 
\[(-\Delta)^\alpha (\overline{\phi} -u)(x_1)<0 \quad \text{ and } \quad (-\Delta)^\alpha (\overline{\phi} -u)(x_1)\geq  1-u(x_1) + u(x_1)^2 \geq 0,\]
a contradiction.

\bigbreak

\textit{Proof of (ii)}. Let $v$ be a continuous positive bounded function such that $v=0$ in $\Omega^c$ and $(-\Delta)^\alpha v \geq v-v^2$ in $\Omega$.
An easy but important remark is the following: thanks to the non-local character of the fractional Laplacian, since $v \not \equiv 0$, we deduce that $v>0$ in the whole domain $\Omega$. Otherwise, the following contradiction holds true :
\[\exists \underline{x} \in \Omega \text{ such that } v(\underline{x}) = 0 \quad  \text{ and } \quad (-\Delta)^\alpha v (\underline{x}) - v(\underline{x}) + v(\underline{x})^2 = - \int_{\mathbb{R}^d} \frac{v(y)}{|x-y|^{d+2 \alpha}}dy <0.\]

Next, let $k$ be any element of $\mathbb{Z}^d$. We introduce  $\underline{w}_k : (x,t) \in \mathbb{R}^d \times [0, +\infty[ \mapsto \underline{w}_k (x,t) \in \mathbb{R}$ as the solution of 
\begin{equation}\label{equation:w_k}
  \left\{
      \begin{aligned}
       &\partial_t \underline{w}_k + (-\Delta)^{\alpha} \underline{w}_k=\underline{w}_k-\underline{w}_k^2 &&\text{ in } (\Omega_0  + a_k)\times ]0,+\infty[,\\
      &\underline{w}_k(x,t) =0&&  \text{ in } \mathbb{R}^d \backslash (\Omega_0  + a_k) \times [0,+\infty[ \\
			&\underline{w}_k(x,0)=v(x) && \text{ in } (\Omega_0  + a_k),
      \end{aligned}
    \right. 
\end{equation}
\textcolor{black}{where $\Omega_0$ and $a_k$ are introduced in \eqref{Omega}.} Thanks to the remark above, and recalling \eqref{H1}, we deduce thanks to Theorem 5.1 in \cite{Roquejoffre1} that $\underline{w}_k(.,t) \underset{ t \rightarrow +\infty }{\longrightarrow}  \underline{w}_{stat}(.)$ with $\underline{w}_{stat}$ the solution of
\begin{equation} \label{equation_stationnaire}
  \left\{
      \begin{aligned}
       & (-\Delta)^{\alpha} \underline{w}_{stat}=\underline{w}_{stat}-\underline{w}_{stat} ^2,&& \text{ in } (\Omega_0  + a_k),\\
       &\underline{w}_{stat} =0&&  \text{ in } \mathbb{R}^d \backslash (\Omega_0  + a_k).\\
      \end{aligned}
    \right. 
\end{equation}
Note that the above $\underline{w}_{stat}$ does not depend on the choice of $k$, i.e. $\underline{w}_k(\cdot , t)$ converges as $t$ tends to $+\infty$ to the same $\underline{w}_{stat}$ (up to a translation). Then, we conclude thanks to the comparison principle that 
\[\underline{w}_{stat}(x) \leq v(x), \ \forall x\in \mathbb{R}^d . \]
Since, $(\Omega_0  + a_k)$ is bounded, we apply the results of \cite{Ros-Oton-Serra} to find that there exists a constant $c>0$ such that
\begin{equation*}
c\delta(x)^\alpha1_{(\Omega_0  + a_k)}(x) \leq \underline{w}_{stat}(x) \leq v(x).
\end{equation*}
The previous analysis holds for every $k\in \mathbb{Z}^d$. We conclude that 
\begin{equation}\label{g}
c \delta (x)^\alpha \leq v(x).
\end{equation}
\end{proof}

\begin{proof}[Proof of Theorem \ref{theorem_existence_uniqueness_stationnary_state}] 
The argument relies on the fact that two steady solutions are comparable everywhere thanks to Proposition \ref{proposition:shape}. This is in the spirit of \cite{Berestycki81} and \cite{BHR} in the  context of standard diffusion. Let $u$ and $v$ be two bounded steady solutions of \eqref{equation_1_stat_1}. By the maximum principle, we easily have that for all  $x \in \mathbb{R}^d$, 
\begin{equation*}
u(x) \leq 1 \ \text{ and } v(x)\leq 1.
\end{equation*}
We will assume that 
\begin{equation}\label{evaluer_1/2_2}
v(x_0)\leq u(x_0) \qquad \text{ where } x_0 \in \Omega_0.
\end{equation}
Thanks to Proposition \ref{proposition:shape}, we deduce the existence of two constants $0 < c \leq C$ such that:
\[c \delta(x)^\alpha \leq u(x) \leq C \delta (x)^\alpha \ \text{ and } \ c \delta(x)^\alpha \leq v(x) \leq C \delta (x)^\alpha. \]
Thus there exists a constant $\lambda >1$ such that for all $x \in \mathbb{R}^d$, 
\begin{equation}
u(x) \leq \lambda v(x).
\end{equation}
We set $l_0:=\inf \left\lbrace \lambda \geq 1|\  \forall x\in\mathbb{R}^d, \  u(x) \leq \lambda v(x)\right\rbrace$. The point is to prove by contradiction that $l_0=1$. It implies that $x_0$ is a contact point, and will allow us to conclude thanks to the fractional maximum principle that $u=v$.\\
We assume by contradiction that $l_0>1$. Next, we define : 
\begin{equation}
\widetilde{w} = \underset{x  \in \Omega}{\inf}\ \dfrac{(l_0 v -u)(x)}{\delta(x)^\alpha} \geq 0. 
\end{equation}
There are two cases to be considered.

\textbf{Case 1:} $\widetilde{w} > 0$.\\
We show in this case that we can construct $l_1 < 1$ such that $u(x) \leq l_1 l_0 v(x)$ for all $x \in \mathbb{R}^d$ : a contradiction. If $\widetilde{w} > 0$, we claim that there exists $\mu\in ]0,1[$ and $\nu> 0$ such that for all $x \in \Omega \backslash \Omega_\nu$ (we recall that $\Omega_\nu$ is defined by \eqref{setU}), 
\begin{equation}
\dfrac{\widetilde{w}}{2} \leq \dfrac{(\mu l_0 v -u) (x)}{\delta(x)^\alpha} .
\end{equation}
Indeed, if there does not exist such couple $(\mu, \nu)$, we deduce that for all $n \in \mathbb{N}$, there exists $(x_n)_{n \in \mathbb{N}} \in \Omega$, such that $\delta(x_n) \leq \frac{1}{n}$ and 
\begin{equation*}
\dfrac{((1-\frac{1}{n})l_0 v -u) (x_n)}{\delta(x_n)^\alpha}< \dfrac{\widetilde{w}}{2}.
\end{equation*}
Passing to the liminf we get the following contradiction :
\[0<\widetilde{w}\leq \dfrac{\widetilde{w}}{2}.\]
And so, the existence of the couple $(\mu, \nu)$ implies that
\begin{equation}\label{contradiction_w_positif_1}
(\mu l_0 v - u ) (x) \geq 0,  \ \forall x \in \Omega \backslash \Omega_\nu.
\end{equation}

Next, we claim that 
\begin{equation}
\exists \rho >0 \text{ such that } \forall x \in \Omega_\nu, \text{ we have } \rho \leq (l_0 v - u )(x). 
\end{equation}
Indeed, if such $\rho$ does not exist then there exists a sequence $(x_n)_{n \in \mathbb{N}} \in \Omega$ such that $\delta(x_n) \geq \nu$ and $(l_0v - u )(x_n) \underset{ n \rightarrow +\infty}{\longrightarrow} 0$. Then we obtain 
\[  \frac{(l_0 v - u )(x_n)}{\delta(x_n)^\alpha} \leq \frac{(l_0 v - u )(x_n)}{\nu^\alpha} \underset{n \rightarrow +\infty}{\longrightarrow} 0\]
which is in contradiction with the hypothesis $\widetilde{w}>0$. The existence of such $\rho$ implies that for all $x \in \Omega_\nu$
\begin{equation}\label{contradiction_w_positif_2}
\left((1-\frac{\rho}{\max l_0v }) l_0 v - u \right)(x) \geq 0.
\end{equation}
Finally, if we define $l_1 = \max (\mu ,  1-\frac{\rho}{\max l_0v +1 })$ then we obtain the desired contradiction. Therefore this case cannot occur. 
\bigbreak

\textbf{Case 2:} $\widetilde{w}=0$.\\
We consider $(x_n)_{n \in \mathbb{N}}$ a minimizing sequence of $\widetilde{w}$. There are 3 subcases : a subsequence of $(x_n)_{n \in \mathbb{N}}$ converges to $x_0 \in \Omega$, a subsequence of $(x_n)_{n \in \mathbb{N}}$ converges to $x_b \in \partial \Omega$ and any subsequence of $(x_n)_{n \in \mathbb{N}}$ diverges. 

\textbf{Subcase a: } \textit{ There exists }$ x_0 \in \Omega, \  such \ that \  \frac{(l_0 v-u)(x_0)}{\delta(x_0)^\alpha}=0$.\\
Since $x_0 \in \Omega$ we deduce that $(l_0v-u)(x_0)=0$. Hence, by the maximum principle, $u=l_0 v$. We deduce that $l_0 v$ is a solution of \eqref{equation_1_stat_1} and we conclude that :
\begin{equation}
l_0 ( v -v^2)= l_0 (-\Delta)^{\alpha}(v)=(-\Delta)^\alpha (l_0 v) = l_0 v- (l_0 v)^2.
\end{equation}
This equation leads to $l_0=1$, a contradiction.
\bigbreak
\textbf{Subcase b: }\textit{ There exists} $ x_b \in \partial\Omega, \  such \ that \  \underset{x \in \Omega}{\underset{x \rightarrow x_b,}{\liminf}} \ \frac{(l_0 v-u)(x)}{\delta(x)^\alpha}=0$.\\
Here is a summary of what we know:
\begin{align*}
&   (i) \  l_0 v -u \geq 0, \\
&   (ii) \  (-\Delta)^\alpha  ( l_0 v -u)\geq -l_0(l_0 v -u) ,\\
&   (iii)\   (l_0 v -u)(x_b)=0.
\end{align*}
According to the fractional Hopf Lemma, the previous assumptions leads to $\underset{ x \in \Omega}{\underset{x \rightarrow x_b,}{\liminf}} \  \frac{(l_0 v - u)(x)}{\delta(x)^\alpha} >0$. However, we have assumed that $\underset{ x \in \Omega}{\underset{x \rightarrow x_b,}{\liminf}} \  \frac{(l_0 v - u)(x)}{\delta(x)^\alpha} =0$, a contradiction. 
\bigbreak
\textbf{Subcase c: } \textit{There exists a minimizing sequence $(x_n)_{n \in \mathbb{N}}$ such that $|x_n|$ tends to the infinity.}\\
First, we set 
\[ \overline{x}_k= x_k- a_{\lfloor x_k  \rfloor},\]
where $\lfloor x \rfloor \in \mathbb{Z}^d$ is such that $x \in \Omega_0 + a_{\lfloor x \rfloor}$. Since $\overline{x}_k \in \Omega_0$, we deduce that up to a subsequence $\overline{x}_k$ converges to $\overline{x}_\infty \in \overline{\Omega_0}$. Then we define:
\[u_k(x)=u(x+\overline{x}_k) \text{ and } v_k(x)=v(x+\overline{x}_k).\]
We also define the following set : 
\[\Omega_\infty=\left\lbrace x \in \mathbb{R} \ | \ x +\overline{x}_\infty \in \Omega \right\rbrace.\]
By fractional elliptic regularity (see \cite{Ros-Oton-Serra_2}), we deduce that up to a subsequence $(u_n)_{n \in \mathbb{N}}$ and $(v_n)_{n\in \mathbb{N}}$ converges to $u_\infty$ and $v_\infty$ solutions that verifies 
%
\begin{align*}
\forall x \in \Omega_\infty, &\quad (-\Delta)^\alpha u_\infty(x)=u_\infty(x)-u_\infty(x)^2,  \qquad  (-\Delta)^\alpha v_\infty(x) = v_\infty(x)-v_\infty(x)^2\\
\text{ and } \  \forall x \in \Omega_\infty^c, &\quad u_\infty(x) = v_\infty(x) = 0.
\end{align*}
Remark that 
\[ l_0 v_\infty - u_\infty \geq 0 \ \text{ and } \ \underset{x \in \Omega_\infty}{\underset{x \rightarrow 0}{\liminf}}\ \frac{(l_0 v_\infty - u_\infty)(x)}{\mathrm{dist}(x, \partial \Omega_\infty)^\alpha}=0.\]
Hence, if $\overline{x}_\infty \in \Omega_0$ then $0 \in \Omega_\infty$ and we fall in the subcase a). If $\overline{x}_\infty \notin \Omega_0$ then $0\in \partial \Omega_\infty$ and we fall in the subcase b). Both cases lead to a contradiction. 

Thus, we conclude that $l_0=1$. 
\end{proof}

\begin{remark}
Noticing that for all $(x,k) \in \Omega \times \mathbb{Z}^d$, we have
\begin{equation*}
(-\Delta)^\alpha (n_+(.+a_k))(x) = \int_{\mathbb{R}} \frac{n_+(x+a_k) - n_+(y+a_k)}{|x +a_k -( y +a_k) |^{d+2\alpha}}dy = n_+(x+a_k)-n_+(x+a_k)^2,
\end{equation*}
we deduce by uniqueness of the solution of \eqref{equation_1_stat_1} that $n_+$ is periodic. 
\end{remark}

\section{The fractional heat kernel and the preparation of the initial data}
We first introduce some requirements in order to achieve the proof of the lower bound of Theorem \ref{heat}. Once we have established Theorem \ref{heat}, we apply it to the initial data. Let $u \in C^\infty(\mathbb{R}^d \times ]0, +\infty[)$, then we set for all $(x,t) \in \mathbb{R}^d \times ]0, +\infty[$ 
\begin{equation}\label{L}
L^\alpha(u)(x,t) = \int_{B(0, \nu)} \frac{u(x,t) - u(y,t)}{|y|^{d+2\alpha}}dy.
\end{equation}
We also introduce $\widetilde{\phi}_\nu$ as the principal positive eigenfunction of the operator $L^\alpha$ associated to the principal eigenvalue $\mu_\nu$
\begin{equation*}
\text{ i.e. }
\left\lbrace
\begin{aligned}
& L^\alpha \widetilde{\phi}_\nu =\mu_\nu \widetilde{\phi}_\nu && \text{ in }  B(0,\nu)\\
&\widetilde{\phi}_\nu = 0 && \text{ in } B(0,\nu)^c,\\
&\widetilde{\phi}_\nu \geq 0, \ \|\widetilde{\phi}_\nu \|_\infty = 1.  
\end{aligned}
\right.
\end{equation*}
Next, we state two intermediate technical results.

\begin{lemma}\label{dernierlemme}
Let $w$ be the solution of the equation
\begin{equation}\label{eq_4}
\left\lbrace
\begin{aligned}
&\partial_t w+ L^\alpha w =1 &&  \text{ in }  B(0,\nu)\times ]0,+\infty[\\
&w(x,t) = 0 && \text{ in }  B(0,\nu)^c\times [0,+\infty[ , \\
&w(x,t=0) =  0 && \text{ in } B(0,\nu).
\end{aligned}
\right.
\end{equation}
Then there exists a constant $c_\nu >0$ such that 
\[ c_\nu \times \widetilde{\phi}_\nu(x) \leq w(x,t=1) .\]
\end{lemma}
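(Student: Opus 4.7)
The strategy is to construct an explicit subsolution of \eqref{eq_4} proportional to the eigenfunction $\widetilde{\phi}_\nu$ and then invoke the parabolic comparison principle. I take the ansatz $v(x,t) = c(t)\,\widetilde{\phi}_\nu(x)$, where $c\colon[0,\infty)\to \mathbb{R}$ is a scalar modulation to be determined. Since $L^\alpha \widetilde{\phi}_\nu = \mu_\nu\widetilde{\phi}_\nu$ we get
\[
\partial_t v + L^\alpha v = \bigl(c'(t)+\mu_\nu c(t)\bigr)\,\widetilde{\phi}_\nu(x).
\]
Choosing $c$ as the solution of the linear ODE $c'(t)+\mu_\nu c(t)=1$ with $c(0)=0$, and using $0\leq \widetilde{\phi}_\nu\leq 1$, the right-hand side reduces to $\widetilde{\phi}_\nu(x)\leq 1$, so $v$ is a genuine subsolution of \eqref{eq_4}. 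The ODE is solved explicitly by
\[
c(t)=\frac{1-e^{-\mu_\nu t}}{\mu_\nu}\qquad(\text{or }c(t)=t\text{ if }\mu_\nu=0),
\]
and $c(t)>0$ for every $t>0$ regardless of the sign of $\mu_\nu$, because the numerator and denominator vanish and change sign simultaneously.

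By construction, the initial and exterior data of $v$ match those of $w$: $v(\cdot,0)=0=w(\cdot,0)$ and $v\equiv 0\equiv w$ on $B(0,\nu)^c$. Applying the parabolic comparison principle for the linear integro-differential operator $\partial_t+L^\alpha$ then yields $v(x,t)\leq w(x,t)$ throughout $B(0,\nu)\times[0,\infty)$, and specializing to $t=1$ gives the claim with $c_\nu := c(1)>0$.

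The only step needing a brief justification is the comparison principle for the truncated operator $L^\alpha$. This is standard: $L^\alpha$ is linear, vanishes on constants and has a positive kernel, so at a hypothetical first interior negative minimum $(x_0,t_0)$ of $w-v$ the integrand $(w-v)(x_0)-(w-v)(x_0+y)$ is nonpositive when $x_0+y\in B(0,\nu)$ and strictly negative when $x_0+y\in B(0,\nu)^c$ (where $(w-v)(x_0+y)=0>(w-v)(x_0)$), which forces $L^\alpha(w-v)(x_0,t_0)<0$, contradicting $\partial_t(w-v)+L^\alpha(w-v)\geq 0$ at that point. No deeper obstacle is expected; the essential content of the lemma is the eigenfunction/ODE trick in the first paragraph.
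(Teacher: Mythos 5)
Your argument is essentially identical to the paper's: same ansatz $c(t)\,\widetilde{\phi}_\nu(x)$, same ODE $c'+\mu_\nu c=1$ with $c(0)=0$, same use of $\|\widetilde{\phi}_\nu\|_\infty\le 1$, and the same comparison-principle conclusion with $c_\nu=c(1)$. The extra paragraph justifying the comparison principle for $L^\alpha$ and the remark handling the sign of $\mu_\nu$ are sensible additions, but the core proof coincides with the one in the text.
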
 

\begin{proof}
We define $\tau(t)=\dfrac{1}{\mu_\nu} ( 1-e^{-\mu_\nu t}) $ such that 
\begin{equation*}
\left\lbrace
\begin{aligned}
&\tau'(t) + \mu_\nu \tau(t) =1,\\
&\tau(0)=0.
\end{aligned}
\right.
\end{equation*}
Thanks to this choice of $\tau(t)$, the application $\underline{w}(x,t):= \tau(t) \times \widetilde{\phi}_\nu(x)$ is a sub-solution to \eqref{eq_4}. Actually, we have 
\[(\partial_t + L^\alpha)(\underline{w})-1= \tau'\widetilde{\phi}_\nu + \mu_\nu \tau \widetilde{\phi}_\nu - 1 \leq \tau' \widetilde{\phi}_\nu + \mu_\nu \tau \widetilde{\phi}_\nu - \widetilde{\phi}_\nu=\widetilde{\phi}_\nu(\tau' + \mu_\nu \tau -1)=0.\]
Since $\underline{w}(t=0)=0 \leq w(t=0)$, we can conclude thanks to the comparison principle that for all $(x,t) \in \mathbb{R}^d \times [0, +\infty[$, we have $\underline{w}(x,t) \leq w(x,t)$. Setting the time $t=1$ in the last inequality leads to 
\[\underline{w}(x,1)=\dfrac{1}{\mu_\nu}(1-e^{-\mu_\nu})\phi_\nu(x) = c_\nu \phi_\nu(x) \leq w(x,1).\]
\end{proof}
Next, we establish a barrier function for $L^\alpha$ in the spirit of the one introduced in \cite{Ros-Oton-Serra}.
\begin{lemma}\label{barrier_function_lemma}
There exists a function $\underline{\psi}$ such that 
\begin{equation}\label{BarrierFunctionL}
\left\lbrace
\begin{aligned}
&L^\alpha \underline{\psi} \leq 0 && \text{ in } B(0,\nu) \backslash B(0,\frac{\nu}{2}), \\
&\underline{\psi} =0 && \text{ in } B(0,\nu) ^c, \\
& \underline{\psi} \leq 1 && \text{ in }  B(0,\frac{\nu}{2}), \\
& \underline{c} (\nu-|x|)^\alpha \leq \underline{\psi}   && \text{ in } B(0,\nu) , \\
&\underline{\psi} \text{ is continuous in } B(0,\nu) \backslash B(0,\frac{\nu}{2}).
\end{aligned}
\right.
\end{equation}
\end{lemma}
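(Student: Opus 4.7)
The plan is to construct $\underline{\psi}$ explicitly as a smooth radial function modeled on the Ros--Oton--Serra barrier cited earlier. A natural candidate is
$$\underline{\psi}(x)=\eta(|x|),$$
where $\eta:[0,+\infty)\to[0,+\infty)$ is nonincreasing, equals $A(\nu-r)^{\alpha}$ for $r\in[\nu/2,\nu]$ for a small constant $A>0$ to be fixed, vanishes for $r\geq\nu$, and is smoothly extended on $[0,\nu/2]$ to stay bounded by $1$. With this choice, four of the five required properties are immediate: the vanishing outside $B(0,\nu)$, the bound $\underline{\psi}\leq 1$ on $B(0,\nu/2)$, the lower bound $\underline{c}(\nu-|x|)^{\alpha}\leq\underline{\psi}(x)$ with $\underline{c}=A$, and the continuity of $\underline{\psi}$ on the annulus.

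The heart of the argument is the sign condition $L^{\alpha}\underline{\psi}(x)\leq 0$ for $x\in B(0,\nu)\setminus B(0,\nu/2)$. The strategy exploits a pointwise imbalance: $\underline{\psi}(x)$ is small, of order $(\nu-|x|)^{\alpha}$ near the boundary, while the values $\underline{\psi}(x-z)$ at integration points $z$ for which $x-z$ lies in the inner region are of order $1$. I would split
$$L^{\alpha}\underline{\psi}(x)=\left(\int_{x-z\in B(0,\nu/2)}+\int_{x-z\in B(0,\nu)\setminus B(0,\nu/2)}+\int_{x-z\notin B(0,\nu)}\right)\frac{\underline{\psi}(x)-\underline{\psi}(x-z)}{|z|^{d+2\alpha}}\,dz.$$
The first piece is the dominant \emph{negative} contribution, since $\underline{\psi}(x-z)$ is of order $1$ whereas $\underline{\psi}(x)=O((\nu-|x|)^{\alpha})$. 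The third piece reduces to $\underline{\psi}(x)\int_{x-z\notin B(0,\nu)}|z|^{-d-2\alpha}dz$, which is nonnegative but controlled by $\underline{\psi}(x)$, hence also $O((\nu-|x|)^{\alpha})$. The second piece is handled by H\"older-type estimates on $\eta$, and contributes a term of the same order.

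\textbf{Main obstacle.} The delicate step is quantifying the balance above uniformly in $x\in B(0,\nu)\setminus B(0,\nu/2)$ and choosing the free parameters so the net sign is nonpositive. Concretely, one must exhibit, for every such $x$, a subset of $\{z\in B(0,\nu):x-z\in B(0,\nu/2)\}$ of measure bounded below uniformly in $x$, on which $\underline{\psi}(x-z)-\underline{\psi}(x)$ is bounded below by a positive constant. This yields an absolute negative contribution of order $-A/\nu^{2\alpha}$, which dominates the $O(A(\nu-|x|)^{\alpha}/\nu^{2\alpha})$ contributions from the other two pieces once the profile $\eta$ on $[0,\nu/2]$ and the constant $A$ (hence $\underline{c}$) are adjusted. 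The subtle point near the inner radius $|x|=\nu/2$, where the central/boundary imbalance is weakest, is handled by choosing $\eta$ on $[0,\nu/2]$ strictly larger than its boundary value $A(\nu/2)^{\alpha}$ at an interior point, ensuring the strict inequality persists there.
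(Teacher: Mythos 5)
Your overall plan — a radial profile decaying like $(\nu-r)^\alpha$ near $|x|=\nu$ augmented by a plateau of order $1$ deep inside $B(0,\nu/2)$ — is the same idea the paper uses: the plateau creates a fixed negative contribution to $L^\alpha\underline{\psi}$ on the annulus while the boundary piece is made comparatively small. But your quantitative estimates on the pieces from which you would like to deduce the sign of $L^\alpha\underline{\psi}$ are wrong, and the error hides exactly the mechanism that makes barriers of this type work.

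The third piece is \emph{not} $O\bigl((\nu-|x|)^\alpha\bigr)$. You factor it as $\underline{\psi}(x)\int_{x-z\notin B(0,\nu),\,|z|<\nu}|z|^{-d-2\alpha}\,dz$ and argue the integral is harmless; but the admissible $z$ satisfy $|z|\geq |x-z|-|x|\geq \nu-|x|$, so a direct computation gives
\[
\int_{\nu-|x|}^{\nu} r^{-d-2\alpha}\,r^{d-1}\,dr\ \sim\ (\nu-|x|)^{-2\alpha},
\]
hence the third piece is of order $A(\nu-|x|)^{\alpha}\cdot(\nu-|x|)^{-2\alpha}=A(\nu-|x|)^{-\alpha}$, which \emph{blows up} as $|x|\to\nu$. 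The second piece (the annulus-to-annulus principal value) is also singular near $|x|=\nu$, and a Hölder estimate on $\eta$ is not enough to control it: $[\eta]_{C^\alpha}|z|^\alpha|z|^{-d-2\alpha}$ is not integrable near $0$. What is true is that these two singular contributions cancel to leading order, precisely because the exponent $\alpha$ in $(\nu-r)^\alpha$ is the correct boundary exponent for $(-\Delta)^\alpha$. This cancellation is the non-obvious analytic fact behind the Ros--Oton--Serra barrier and does not follow from the crude bounds you propose; as written, your plan cannot be closed. The paper sidesteps this entirely by taking the outer profile to be $f(x)=(\nu^2-|x|^2)^\alpha$, for which $(-\Delta)^\alpha f$ is \emph{constant} in $B(0,\nu)$ (the explicit Getoor identity, by scaling from the unit ball), so that $L^\alpha f = (-\Delta)^\alpha f - \int_{|y|\geq\nu}\frac{f(x)-f(x+y)}{|y|^{d+2\alpha}}dy$ is manifestly bounded on the annulus. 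It then writes $\underline{\psi}=\tfrac1C f+\tfrac12 1_{B(0,\nu/4)}$: the indicator term contributes $-\tfrac12\int_{B(0,\nu/4)}|x-y|^{-d-2\alpha}dy$ on the annulus, a quantity bounded away from zero \emph{independently of $C$}, while $\tfrac1C L^\alpha f$ can be made small by choosing $C$ large. If you want to salvage a radial construction, you need either to invoke this Getoor/Ros--Oton--Serra boundedness of $L^\alpha f$ directly, or to exhibit the precise boundary cancellation rather than bounding the two singular pieces separately.
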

\begin{proof}
Choose $C$ large enough such that the first point and the third point of \eqref{BarrierFunctionL} holds true with the following $\underline{\psi}$:
\[\underline{\psi}(x) := \left( \dfrac{1}{C}  (\nu^2-|x|^2)^\alpha + \frac{1}{2} 1_{B(0,\frac{\nu}{4})}(x) \right) 1_{B(0,\nu)}(x).\]
Indeed, defining $f(x) := (\nu^2-|x|^2)^\alpha$,  we have for $C$ large enough and $x\in B(0,\nu) \backslash B(0, \frac{\nu}{2})$
\[L^\alpha \underline{\psi}(x) \leq  \frac{ L^\alpha f(x)}{C} -\frac{1}{2}\int_{B(0, \frac{\nu}{4})} \frac{1}{|x-y|^{d+2\alpha}}dy \leq  \frac{\underset{B(0,\nu) \backslash B(0,\frac{\nu}{2})}{\sup} | L^\alpha f |}{C} - \frac{m(B(0,\frac{\nu}{4}))}{2 } \times  \left( \frac{4}{\nu} \right)^{d+2\alpha} < 0.\]
The other conditions follow. 
\end{proof}

\begin{proof}[Proof of Theorem \ref{heat}]

The aim is to prove that there exists a constant $c>0$ such that 
\begin{equation}\label{goalHeat}
\forall x \in \mathcal{O} , \text{ we have } \frac{ c \delta (x)^\alpha }{1+|x|^{d+2\alpha}} \leq p(x,1) .
\end{equation}
To achieve the proof, there will be 4 steps.\\
\textcolor{black}{First, up to a translation and possibily a scaling of $n$, we prove \eqref{goalHeat} in $\left\lbrace |x|< 1+2\nu \right\rbrace$ where $\nu = \min(\frac{1}{4}, r_1)$ (with $r_1$ the radius provided by the uniform interior ball).} Next, we introduce a suitable decomposition of the fractional Laplacian (involving $L^\alpha$) to prove the existence of $c_1>0$ such that
\textcolor{black}{\begin{equation}\label{eq_2}
\left\lbrace
\begin{aligned}
& \frac{c_1}{1+|x|^{d+2\alpha}} \leq \partial_t p(x,t) + L^\alpha p(x,t) +\lambda p(x,t) && \text{ for all } (x,t) \in  \textcolor{black}{\left( \mathcal{O} \backslash \left\lbrace |x| > 1+\nu \right\rbrace \right)} \times ]0,1], \\
&p(x,t)\geq 0 && \text{ for all } (x,t) \in \left( \mathcal{O} \backslash \left\lbrace |x| > 1+\nu \right\rbrace \right) ^c  \times [0,1],\\
&p(x,t=0)=n_0(x) \in C^\infty_0(\mathcal{O}, \mathbb{R}^+)\cap C_c(\mathbb{R}^d)
\end{aligned}
\right.
\end{equation}
where $L^\alpha$ is defined by \eqref{L} and $\lambda = \int_{\mathbb{R}^d \backslash B(0,\nu)} \frac{1}{|y|^{d+2\alpha}}dy$.
In a third step, we will show that }
\textcolor{black}{\begin{equation}\label{step2heat}
\exists c_2>0 \text{ such that } \frac{c_2}{1+|x|^{d+2\alpha}} \leq p(x,t=1) \text{ for all } x \in \left( \Omega_\nu \cap \left\lbrace |x| > 1+2\nu \right\rbrace \right) .
\end{equation}}
Finally, we prove the same kind of result near the boundary :
\textcolor{black}{\begin{equation}\label{step3heat}
\exists c_3>0 \text{ such that } \frac{c_3 \delta(x)^\alpha}{1+|x|^{d+2\alpha}} \leq p(x,t=1) \text{ for all } x \in \left(\mathcal{O} \backslash \mathcal{O}_\nu \cap \left\lbrace |x| > 1+2\nu \right\rbrace  \right) .
\end{equation}}

\textbf{Step 1.}
First, note that thanks to a translation and possibly a scaling, we can suppose the following hypothesis:  
\begin{equation}\label{Condition:initial:heat:kernel}
\exists \sigma > 0 \text{ such that } \sigma < n_0(x)\ \text{ for all } x \in B(0,2).
\end{equation} 
Next, we claim that 
\begin{equation}\label{infp}
\underset{z \in B(0,1+2\nu)}{\underset{t \in (0,1) }{\inf}}p(z,t)>0.
\end{equation}
Indeed, let $\phi_2$ be the first positive eigenfunction of the Dirichlet fractional Laplacian in $B(0,2)$ and $\lambda_2$ the associated eigenvalue
\begin{equation*}
\mathrm{i.e.} \ 
\left\lbrace
\begin{aligned}
 (-\Delta)^\alpha\phi_2 &=\lambda_2 \phi_2 && \ \text{ for } \ x \in B(0,2), \\
\phi_2 &= 0&& \ \text{ for } x \in B(0,2)^c, \\
\| \phi_2 \|_\infty& =1.
\end{aligned}
\right.
\end{equation*}
Then the function 
\[ \underline{p}(x,t) := \sigma \times \phi_2(x) \times e^{-\lambda_2 t}\] 
is a sub-solution to \eqref{app_eq_1} (where $\sigma$ is defined by \eqref{Condition:initial:heat:kernel}). According to the comparison principle, we have for all $(x,t) \in B(0,1+2\nu) \times [0,1]$ 
\[0 <\underset{y \in B(0,1+2\nu)}{\underset{s \in \  [0,1]}{\min}} \underline{p}(y,s) =  \sigma \times  \underset{B(0,1+2\nu)}{\min} \phi_2 \times e^{-|\lambda_2|} \leq \underline{p}(x,t)\leq p(x,t).\] 
\textcolor{black}{We deduce that if $c$ is small enough, then \eqref{goalHeat} holds true for all $x \in B(0, 1+2\nu)$. }

\textbf{Step 2. } In this step we prove \eqref{eq_2} which is a key element to prove \eqref{goalHeat} for $x \in \left( \left\lbrace |x| > 1+2\nu \right\rbrace \cap \Omega \right)$. 

Then, we focus on $\left\lbrace |x|>1+\nu \right\rbrace$. We split the fractional Laplacian into 2 parts:
\begin{equation}\label{SplittingDeltaAlpha}
(-\Delta)^\alpha p(x,t)=\int_{\mathbb{R}^d\backslash B(0,\nu)} \dfrac{p(x,t)-p(x+y,t)}{|y|^{d+2\alpha}}dy+ L^\alpha p(x,t) = I_1(x,t) + L^\alpha p(x,t).
\end{equation}
For $I_1$, we obtain : 
\[ I_1(x,t)=\int_{\mathbb{R}^d \backslash B(0,\nu)} \dfrac{p(x,t)-p(x+y,t)}{|y|^{d+2\alpha}}dy =\lambda p(x,t) - \int_{\mathbb{R}^d \backslash B(0,\nu)} \dfrac{p(x+y,t)}{|y|^{d+2\alpha}}dy.\]
Since $|x| > 1+\nu$, we have 
\begin{equation}\label{app_ineq_1}
\underset{z \in B(0,1+\nu)}{\underset{t \in (0,1) }{\inf}}p(z,t) \int_{B(0,1)}\dfrac{1}{|z-x|^{d+2\alpha}}dz \leq  \int_{B(-x,1)}\dfrac{p(x+y,t)}{|y|^{d+2\alpha}}dy \leq \int_{\mathbb{R}^d \backslash B(0,\nu)} \dfrac{p(x+y,t)}{|y|^{d+2\alpha}}dy .
\end{equation}
Equation \eqref{app_ineq_1} ensures the existence of a positive constant $c_1>0$ such that for all \textcolor{black}{$(x,t) \in \left( \Omega \cap \left\lbrace |x| > 1+\nu \right\rbrace \right) \times [0,1[$} we have
\begin{equation*}
 \frac{c_1}{1+|x|^{d+2\alpha}} \leq \int_{\mathbb{R}^d \backslash B(0,\nu)} \dfrac{p(x+y,t)}{|y|^{d+2\alpha}}dy.
\end{equation*}
It follows that 
\begin{equation}\label{I1final}
I_1(x,t) \leq \lambda p(x,t) - \frac{c_1}{1+|x|^{d+2\alpha}}.
\end{equation}
Equations \eqref{SplittingDeltaAlpha} and \eqref{I1final} lead to \eqref{eq_2}. Moreover, if we define $v(x,t)=e^{\lambda t}\times p(x,t)$, we find the following system:
\textcolor{black}{\begin{equation}\label{eq_3}
\left\lbrace
\begin{aligned}
&\frac{c_1}{1+|x|^{d+2\alpha}} \leq \partial_t v(x,t) + L^\alpha v(x,t)  && \text{ for } (x,t) \in    \left( \Omega \cap \left\lbrace |x| > 1+\nu \right\rbrace \right) \times ]0,1], \\
& v(x,t)\geq 0 && \text{ for } (x,t) \in \left( \Omega \cap \left\lbrace |x| > 1+\nu \right\rbrace \right)^c\times [0,1] , \\
&v(x,t=0)=n_0(x) \in C^\infty_0(\Omega, \mathbb{R}^+).
\end{aligned}
\right.
\end{equation}}

\bigbreak
\textbf{Step 3.} \textcolor{black}{By uniform continuity of $\left( x \mapsto \frac{1}{1+|x|^{d+2\alpha}} \right)$ in $\mathbb{R}^d$, we deduce the existence of $c_1'>0$ such that for all $x_0 \in \left( \mathcal{O}_\nu \cap \left\lbrace |x| > 1+\nu \right\rbrace \right) $ and all $(x,t) \in\left( \mathcal{O}_\nu \cap \left\lbrace |x| > 1+2\nu \right\rbrace \right) \times ]0,1]$} we have
\begin{equation}\label{Step2TheoremHeat}
\dfrac{c_1'}{1+|x_0|^{d+2\alpha}} 1_{B(0,\nu)}(x-x_0) \leq \frac{c_1}{1+|x|^{d+2\alpha}} \leq  \partial_t v(x,t) + L^\alpha v(x,t).
\end{equation}
Inequality \eqref{Step2TheoremHeat} gives that for all $(x,t) \in \left( \mathcal{O}_\nu \cap \left\lbrace |x| > 1+\nu \right\rbrace \right) \times ]0,1]$
\begin{equation*}
1_{B(0,\nu)}(x-x_0)  \leq \partial_t (\frac{1+|x_0|^{d+2\alpha}}{c_1'} v(x,t)) + L^{\alpha} (\frac{1+|x_0|^{d+2\alpha}}{c_1'} v(x,t)) .
\end{equation*}
Then, according to the comparison principle and Lemma \ref{dernierlemme}, we deduce that
\textcolor{black}{\begin{equation}\label{equation_avec_a}
\forall x \in \left( \mathcal{O}_\nu \cap \left\lbrace |x| > 1+\nu \right\rbrace \right),  \quad 
c_\nu \widetilde{\phi}_\nu (x-x_0)  \leq \dfrac{1+|x_0|^{d+2\alpha}}{c_1'} v(x,t=1) .
\end{equation}}
If we evaluate \eqref{equation_avec_a} at $x=x_0$, we obtain
\[\dfrac{c_\nu c_1' e^{-\lambda} \widetilde{\phi}_\nu(0)}{1+|x_0|^{d+2\alpha}}  \leq  p( x_0,t=1). \]
Defining $c_2 = c_\nu c_1' e^{-\lambda}  \widetilde{\phi}_\nu(0) $ leads to \eqref{step2heat}.
\bigbreak

\textbf{Step 4. } As in the proof of Proposition \ref{proposition:shape}, we can show by contradiction that there exists a positive constant $c_0$ such that for all $x \in \mathbb{R}^d$, 
\[c_0\underline{\psi}(x)\leq  \widetilde{\phi}_\nu (x)\]
where $\underline{\psi}$ is defined in Lemma \ref{barrier_function_lemma}.
Then we take \textcolor{black}{$x_1 \in \left( \mathcal{O} \backslash \mathcal{O}_\nu \right) \cap \left\lbrace |x|>1+2\nu \right\rbrace$. Since $\mathcal{O}$} satisfies the uniform interior ball condition, there exists $x_0 \in \partial \mathcal{O}_\nu$ such that $x_1 \in B(x_0, \nu)$, $B(x_0, \nu) \subset \mathcal{O} \cap \left\lbrace |x|>1+\nu \right\rbrace$ and $\nu - |x_1-x_0| = \delta(x_1)$. Thanks to \eqref{equation_avec_a} and the fourth point of Lemma \ref{barrier_function_lemma}, we deduce 
\begin{equation*}\label{equation:bord:heat:kernel:2}
 c_\nu c_0 \underline{c}\nu \delta(x_1)^\alpha     \leq c_\nu c_0 \underline{\psi}(x_1 - x_0)  \leq  c_\nu \phi_\nu(x_1-x_0) \leq \dfrac{(|x_0|+ 1)^{d+2\alpha}}{c_1'} v(x_1,t=1) .
\end{equation*}
We deduce that there exists $c_3 >0$ such that \eqref{step3heat} holds true.
\bigbreak
Combining \eqref{step2heat}, \eqref{step3heat} and \eqref{infp} yields the conclusion of the Theorem.
\end{proof}

We apply Theorem \ref{heat} to show that starting from $n(x,0) \in C^\infty_0(\Omega) \cap C^\infty_c(\mathbb{R})$, the solution of \eqref{equation_1} $n( \cdot, t=1)$ has algebraic tails.

\begin{proposition}
There exists two constants $c_m$ and $c_M$ depending on $n_0$ such that for all $x \in \Omega$, we have 
\begin{equation}\label{NewInitialData}
 \frac{c_m \delta(x)^\alpha}{1+|x|^{d+2\alpha}} \leq n(x,1) \leq \frac{c_M \delta(x)^\alpha }{1+|x|^{d+2\alpha}}.
\end{equation}
\end{proposition}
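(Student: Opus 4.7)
The plan is to bracket the nonlinear solution $n$ by multiples of $p$, the solution of the linear fractional heat equation \eqref{app_eq_1} on $\mathcal{O}=\Omega$ with the same initial datum $n_0$, and then read off \eqref{NewInitialData} directly from the tail estimates of Theorem \ref{heat}. The key observation is that, once $n$ is known to be globally bounded, the Fisher-KPP reaction $n-n^2$ is linearly sandwiched, so the nonlinear evolution differs from the linear one only by an exponential time factor, which becomes a harmless multiplicative constant at $t=1$.

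First, I would check that $n$ stays globally bounded. Setting $M := \max(1,\|n_0\|_\infty)$, the function $\bar n(x) := M\,1_{\Omega}(x)$ is a stationary super-solution of \eqref{equation_1}, since $(-\Delta)^\alpha \bar n(x) = M \int_{\Omega^c} |x-y|^{-(d+2\alpha)} dy \geq 0$ in $\Omega$ while $\bar n(1-\bar n) = M(1-M) \leq 0$. The comparison principle recalled in Section~2 then gives $0 \leq n(x,t) \leq M$ everywhere. One also has to verify that $\Omega$ inherits the uniform interior and exterior ball condition from $\Omega_0$ and the non-overlapping hypothesis on the translates, so that Theorem \ref{heat} genuinely applies with $\mathcal{O}=\Omega$; this is immediate from the smoothness of $\Omega_0$ and the periodic structure.

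For the upper bound, since $n \geq 0$ one has $n(1-n) \leq n$, so $\tilde n(x,t) := e^{-t} n(x,t)$ satisfies
\[ \partial_t \tilde n + (-\Delta)^\alpha \tilde n = e^{-t}\bigl(-n + n(1-n)\bigr) \leq 0 \quad \text{in } \Omega\times ]0,+\infty[, \]
vanishes in $\Omega^c$ and equals $n_0$ at $t=0$. Comparison with $p$ in \eqref{app_eq_1} yields $n(x,t) \leq e^t p(x,t)$, and evaluating at $t=1$ together with the upper estimate of Theorem \ref{heat} gives the upper tail of \eqref{NewInitialData} with $c_M = eC$.

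For the lower bound, the boundedness $n \leq M$ gives $n(1-n) \geq -(M-1) n$, hence $\hat n(x,t) := e^{(M-1)t} n(x,t)$ is a super-solution of \eqref{app_eq_1}, so that $n(x,t) \geq e^{-(M-1)t} p(x,t)$; at $t=1$ the lower estimate of Theorem \ref{heat} finishes the proof with $c_m = e^{-(M-1)} c$. I do not foresee any real obstacle here: the argument is a single-step sandwich between two linear heat evolutions, everything else being a direct application of the kernel estimate already established. The only mild point worth flagging is the verification that $\Omega$ satisfies the uniform ball condition, which is needed to invoke Theorem \ref{heat} on the full periodic domain rather than on each connected component separately.
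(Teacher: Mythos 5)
Your proof is correct and takes essentially the same route as the paper: both arguments linearly sandwich the reaction term $n-n^2$ using the a priori bound $0 \le n \le M$, reduce to the Dirichlet fractional heat semigroup by an exponential time factor, and then invoke Theorem~\ref{heat} at $t=1$. The only cosmetic difference is that the paper introduces an auxiliary linear problem (with right-hand side $-Mn$) and transforms it, whereas you absorb the exponential directly into $n$; the content is identical.
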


\begin{proof}
Defining $M:= \max(\max n_0,1)$, the solution $n$ belongs to the set $[0,M]$ ($0$ is a sub-solution and $M$ is a super-solution). 
\bigbreak
We begin with the proof that $\frac{c_m \delta(x)^\alpha}{1+|x|^{d+2\alpha}} \leq n(x,1) $. 

Let $\underline{n}$ be the solution of :
\begin{equation}\label{sub equation}
\left\lbrace
\begin{aligned}
&\partial_t \underline{n} (x,t) + (-\Delta)^\alpha \underline{n} (x,t) = -M \underline{n}(x,t) && \text{ for all }(x,t) \in \Omega \times ]0,+\infty[,\\
&\underline{n}(x,t)=0 && \text{ for all } (x,t) \in \Omega^c \times [0,+\infty[, \\
&\underline{n}(x,0)=n_0(x) && \text{ for all } x \in \mathbb{R}^d,
\end{aligned}
\right.
\end{equation}
Thanks to the comparison principle, we deduce that for all $(x,t) \in \mathbb{R} \times [0, +\infty[$, we have 
\[\underline{n}(x,t) \leq n(x,t).\]
Moreover, if we define $\underline{p}(x,t) = e^{Mt} \underline{n}(x,t)$, we find that $\underline{p}$ is solution of \eqref{app_eq_1}. Since $\Omega$ fullfies the uniform interior and exterior ball condition, we deduce thanks to Theorem \ref{heat} that there exists $c_m>0$ such that  
\begin{equation}
 \dfrac{c_m \delta(x)^\alpha}{1+|x|^{d+2\alpha}} \leq \underline{n}(x,t=1) \leq n(x,t=1). 
\end{equation}

The proof works the same for the other bound.
\end{proof}


\section{The proof of Theorem \ref{theorem_convergence_1}}

\subsection{Rescaling and preparation}
\textcolor{black}{The aim of this subsection is to establish the following Theorem. 
\begin{theorem}\label{Cor2}
We assume \eqref{H1} and \eqref{H2} then for all \textcolor{black}{$\nu>0$}, the following holds true
\begin{enumerate}
\item For all $c < \frac{|\lambda_0|}{d+2\alpha}$, there exists a constant $\sigma>0$ and a time $t_{\sigma}>0$ such that 
\begin{equation}\label{step1theorem4}
\forall (x,t) \in \left( \Omega_\nu \times \left\lbrace | x|  < e^{ct} \right\rbrace \right) \times ]t_\sigma, +\infty[ \text{ we have } \sigma < n(x,t)  .
\end{equation}
\textcolor{black}{\item For all $C>\frac{|\lambda_0|}{d+2\alpha}$, there exists two constants $\overline{C},\kappa>0$ such that we have for all $(x,t) \in \left\lbrace |x|>e^{Ct} \right\rbrace \times ]1, +\infty[$
\begin{equation}\label{eqCor3}
n(x,t) \leq \frac{\overline{C}}{1+e^{\kappa t } }. 
\end{equation}}
\end{enumerate}
\end{theorem}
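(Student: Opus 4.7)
To prove both statements I construct explicit sub- and super-solutions of product form
\[\underline{n}(x,t) = \frac{\underline{a}\,\phi_0(x)}{1+\underline{b}_0\,e^{-(|\lambda_0|-\eta)t}\,|x|^{d+2\alpha}}, \qquad \overline{n}(x,t) = \frac{\overline{a}\,\phi_0(x)}{1+\overline{b}_0\,e^{-(|\lambda_0|+\eta)t}\,|x|^{d+2\alpha}},\]
for some $\eta>0$ to be chosen arbitrarily small, with $\underline{a},\overline{a},\underline{b}_0,\overline{b}_0>0$ constants to be adjusted and $\phi_0$ the periodic principal eigenfunction from \eqref{lambda0}. The shape is dictated by the scaling \eqref{scale}: the moving level set $\{\underline{b}(t)|x|^{d+2\alpha}=1\}$ travels precisely like $e^{(|\lambda_0|\mp\eta)t/(d+2\alpha)}$, matching the conjectured invasion speed. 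The algebraic tail estimates of \eqref{NewInitialData} then provide a pointwise comparison at $t=1$, and the comparison principle propagates the bound forward in time.

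\textbf{Part (1).} Once $\underline{n}$ has been shown to satisfy $\partial_t\underline{n}+(-\Delta)^\alpha\underline{n}\leq\underline{n}(1-\underline{n})$ in $\Omega\times(0,\infty)$, I proceed as follows. Since $\phi_0\lesssim\delta^\alpha$ by an application of Proposition \ref{proposition:shape} (part (i)) to a suitable multiple of $\phi_0$, and $\phi_0$ is bounded on $\mathbb{R}^d$, one can fix $\underline{a}$ small and $\underline{b}_0$ large so that $\underline{n}(x,0)\leq c_m\delta(x)^\alpha/(1+|x|^{d+2\alpha})\leq n(x,1)$ pointwise, by \eqref{NewInitialData}. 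The comparison principle then yields $\underline{n}(x,t)\leq n(x,t+1)$. Given $c<|\lambda_0|/(d+2\alpha)$, choose $\eta$ small enough so that $c(d+2\alpha)<|\lambda_0|-\eta$; on $\{|x|<e^{ct}\}$ we have $\underline{b}_0\,e^{-(|\lambda_0|-\eta)t}|x|^{d+2\alpha}\leq\underline{b}_0\,e^{(c(d+2\alpha)-|\lambda_0|+\eta)t}\to 0$ as $t\to\infty$, so that $\underline{n}(x,t)\geq\tfrac{\underline{a}}{2}\phi_0(x)$ for $t\geq t_\sigma$. Because $\phi_0$ is periodic, continuous, and strictly positive in $\Omega$, we have $\inf_{\Omega_\nu}\phi_0>0$, and $\sigma:=\tfrac{\underline{a}}{2}\inf_{\Omega_\nu}\phi_0$ gives \eqref{step1theorem4}.

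\textbf{Part (2).} The super-solution analysis is symmetric. Verifying $\partial_t\overline{n}+(-\Delta)^\alpha\overline{n}\geq\overline{n}(1-\overline{n})$ will impose $\overline{a}$ large and $\overline{b}_0$ small, making $n(x,1)\leq\overline{n}(x,0)$ thanks to Proposition \ref{proposition:shape} (part (ii)) and \eqref{NewInitialData}. The comparison principle yields $n(x,t+1)\leq\overline{n}(x,t)$. Given $C>|\lambda_0|/(d+2\alpha)$, take $\eta$ with $\kappa:=C(d+2\alpha)-(|\lambda_0|+\eta)>0$; for $|x|>e^{Ct}$ this gives
\[n(x,t+1)\leq\frac{\overline{a}\,\|\phi_0\|_\infty}{1+\overline{b}_0\,e^{\kappa t}},\]
which is \eqref{eqCor3} up to relabeling the constants and shifting $t$ by $1$.

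\textbf{The main obstacle.} The technical heart is the verification of the sub- and super-solution inequalities. Writing $\underline{n}=\underline{a}\phi_0 G$ with $G(x,t)=1/(1+\underline{b}(t)|x|^{d+2\alpha})$, the product rule for the fractional Laplacian gives
\[(-\Delta)^\alpha(\phi_0 G) = \phi_0\,(-\Delta)^\alpha G + G\,(-\Delta)^\alpha\phi_0 - \mathcal{I}, \quad \mathcal{I}(x,t)=\int\frac{(\phi_0(x)-\phi_0(y))(G(x,t)-G(y,t))}{|x-y|^{d+2\alpha}}\,dy.\]
Using $(-\Delta)^\alpha\phi_0=(1+\lambda_0)\phi_0$ in $\Omega$ together with the elementary identity $\partial_t G=(|\lambda_0|-\eta)G(1-G)$, the sub-solution inequality reduces, after division by $\underline{a}\phi_0>0$, to
\[(-\Delta)^\alpha G-\mathcal{I}/\phi_0\leq\eta G+\bigl[(|\lambda_0|-\eta)-\underline{a}\phi_0\bigr]G^2,\]
which holds provided $\underline{a}\|\phi_0\|_\infty\leq|\lambda_0|-\eta$ and the left-hand side is controlled. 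Both error terms carry an extra $|x|^{-d-2\alpha}$ decay inherited from the algebraic tail of $G$ and should therefore be dominated by $\eta G$ for $|x|$ large; the delicate point is uniform control near $\partial\Omega$, where $\phi_0\sim\delta^\alpha$ vanishes and the ratio $\mathcal{I}/\phi_0$ could \emph{a priori} blow up. A splitting of the defining integral for $\mathcal{I}$ into a near-field part (handled via the optimal $C^\alpha$ boundary regularity of $\phi_0$ from \cite{Ros-Oton-Serra}) and a far-field part (absorbed by the weight $|y|^{-d-2\alpha}$) should suffice, in the spirit of \cite{Mirrahimi1} and \cite{Cabre-Coulon-Roquejoffre}. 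This is the main new difficulty relative to \cite{Coulon}, precisely because the subsolution must vanish at $\partial\Omega$.
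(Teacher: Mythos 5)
Your plan is the Cabr\'e--Coulon--Roquejoffre ansatz $\underline{n}=\underline{a}\,\phi_0\,G$ directly on the whole of $\Omega$ (the paper's ``first approach'' of Section~2.1), whereas the paper uses the M\'el\'eard--Mirrahimi rescaling and builds the sub/super-solutions with the eigenfunctions $\phi_\nu$, $\phi_{-\nu}$ of the \emph{shrunken/enlarged} domains. You correctly identify the obstruction (the cross-term $\mathcal{I}/\phi_0$ near $\partial\Omega$, where $\phi_0\sim\delta^\alpha$ vanishes), but the claim that a near-field/far-field splitting ``should suffice'' is not right, and this is precisely the step where the proof breaks.

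Here is why the difficulty is not merely technical. Let $x_b\in\partial\Omega$, and let $x\to x_b$ along the interior of $\Omega$. In your product-rule decomposition, both $\phi_0\,(-\Delta)^\alpha G$ and $G\,(-\Delta)^\alpha\phi_0=(1+\lambda_0)\phi_0G$ tend to $0$, and so do $\partial_t\underline n$ and $\underline n(1-\underline n)$. The cross-term, however, converges to
\[
\mathcal{I}(x_b)\;=\;\int_{\mathbb{R}^d}\frac{\bigl(\phi_0(x_b)-\phi_0(y)\bigr)\bigl(G(x_b,t)-G(y,t)\bigr)}{|x_b-y|^{d+2\alpha}}\,dy
\;=\;-\int_{\Omega}\frac{\phi_0(y)\bigl(G(x_b,t)-G(y,t)\bigr)}{|x_b-y|^{d+2\alpha}}\,dy,
\]
which is finite (by the $C^\alpha$ regularity of $\phi_0$ and the regularity of $G$) but \emph{generically nonzero}. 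Consequently $(-\Delta)^\alpha\underline n(x)\to-\mathcal{I}(x_b)$ while $\underline n(1-\underline n)\to0$, so the subsolution inequality forces $\mathcal{I}(x_b)\geq 0$. Taking $x_b\in\partial\Omega$ close to the origin — where $G(\cdot,t)=g(b(t)^{1/(d+2\alpha)}\,\cdot)$ is locally maximal — one has $G(x_b,t)>G(y,t)$ for most $y$, so $\mathcal{I}(x_b)<0$ and the inequality \emph{fails} in a neighbourhood of that boundary point. The same problem occurs, with opposite sign, for parts of $\partial\Omega$ that are far from the origin, so there is no consistent choice for either $\underline n$ or $\overline n$. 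This is not a matter of obtaining a better estimate for $\mathcal{I}$: the difficulty is structural and no multiplicative constant in front of $\phi_0$ can fix it, because $\phi_0(1-\underline a\phi_0 G)\to 0$ is strictly lower order than the $O(1)$ cross-term.

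The paper's construction contains exactly the ingredient you are missing, and it is not the near/far splitting. For the subsolution it multiplies $g$ by $\phi_{\nu,\varepsilon}+\varepsilon$, which is bounded below by $\varepsilon>0$ everywhere, and it only requires the subsolution inequality on $\Omega_\nu^\varepsilon$ (the rescaled shrunken domain), where the cross-term divided by $\phi_{\nu,\varepsilon}+\varepsilon$ is therefore uniformly controlled by $\varepsilon^{-1}$ times the smallness factor $a^{2\alpha-\gamma}$ from Lemma~\ref{lemme_utile}. On the collar $\Omega\setminus\Omega_\nu$ and on $\Omega^c$ the sub-solution is at most $\varepsilon$, so the off-domain ordering $f_\varepsilon^m\leq n_\varepsilon+\varepsilon$ is automatic and the comparison principle applies on $\Omega_\nu^\varepsilon$. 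Symmetrically, the supersolution uses $\phi_{-\nu}$, the eigenfunction of the \emph{enlarged} domain $\Omega_{-\nu}$, which is bounded away from zero on $\overline{\Omega}$. To repair your argument you would have to make an analogous modification, e.g.\ replace $\phi_0$ by $\phi_\nu+\varepsilon$ and prove the subsolution inequality only on $\Omega_\nu$ (similarly $\phi_{-\nu}$ for the supersolution); as it stands, the ``should suffice'' step is a genuine gap.
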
}
First we establish sub and super-solutions by performing the rescaling \eqref{scale}. Finally, we prove Theorem \ref{Cor2} by performing the inverse \textcolor{black}{of this rescaling} on the sub and super-solutions. 


We rescale the solution of \eqref{equation_1} as follows : 
\begin{equation}\label{def_n_varepsilon}
n_\varepsilon (x,t) = n\left(|x|^{\frac{1}{\varepsilon}}\dfrac{x}{|x|}, \dfrac{t}{\varepsilon}\right).
\end{equation}
Next, the equation becomes 
\begin{equation}
\tag{$1_\varepsilon$}
\label{equation_scall}
\left\lbrace
\begin{aligned}
&\varepsilon \partial_t n_\varepsilon + (-\Delta)^\alpha_\varepsilon n_\varepsilon = n_\varepsilon(1-n_\varepsilon) \ && \text{ for } (x,t) \in \Omega^\varepsilon \times ]0,+\infty[, \\
&n_\varepsilon(x,t) =0 \ && \text{ for } (x,t) \in \Omega^{\varepsilon^c} \times ]0,+\infty[,\\
&n_\varepsilon(x,0)=n_{0,\varepsilon}(x) &&\in C^{\infty}_0(\Omega^\varepsilon, \mathbb{R}^+). \\
\end{aligned}
\right.
\end{equation}
where $(-\Delta)^\alpha_\varepsilon n_\varepsilon (x,t)= (-\Delta)^\alpha n\left( |x|^\frac{1}{\varepsilon}\frac{x}{|x|}, \frac{t}{\varepsilon}\right)$ and $\Omega^\varepsilon = \left\lbrace x \in \mathbb{R}^d \ | \ |x|^{\frac{1}{\varepsilon}-1}x \in \Omega \right\rbrace$.\\
Next, we set
\[g(x) := \dfrac{1}{1+|x|^{d+2\alpha}}.\]
We state the behavior of $g$ under the fractional Laplacian in the spirit of \cite{Cabre-Coulon-Roquejoffre}.  

\begin{lemma}
\label{lemme_utile}
Let $\gamma$ be a positive constant in $]0, \alpha[$ such that $2\alpha - \gamma < 1$. Let $\chi \in C^\alpha(\mathbb{R}^d)$ be a periodic positive function. Then there exists a positive constant $C$, such that we have for all $x \in \mathbb{R}$: \\
(i) for all $a>0$,
\[| (-\Delta)^{\alpha} g (ax) | \leq a^{2 \alpha} C g(ax), \]
(ii) for all $a \in ]0,1]$, 
\[|\widetilde{K}(g(a .),\chi)(x) | \leq \frac{C a ^{2 \alpha - \gamma}}{1+(a |x|)^{d+2\alpha}} = C a^{2 \alpha - \gamma} g(a |x|),\]
where $\widetilde{K}(u,v)(x)=\int_{\mathbb{R}} \frac{(u(x)-u(y))(v(x)-v(y))}{|x-y|^{d+2\alpha}}dy$ is such that 
\[(-\Delta)^\alpha (u\times v)= (-\Delta)^\alpha (u )\times v + u \times (-\Delta)^\alpha (v)- \widetilde{K}(u,v).\]
\end{lemma}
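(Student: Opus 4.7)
My plan is to treat the two statements separately, with (i) being a fairly direct scaling argument that reduces everything to the case $a=1$, while (ii) requires a careful splitting of the integral in terms of the distance $|x-y|$ relative to $1/a$ and to $|x|$. Both arguments rely on the basic fact that $g(x)=(1+|x|^{d+2\alpha})^{-1}$ is $C^\infty$, radially decreasing, and has the property $g(y)\le C g(x)$ whenever $|y|\ge |x|/2$, which lets us bring the $g(a|x|)$-weight in front of the estimates.

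For (i), I would start by using the change of variables $y=x+z/a$ in the definition of the fractional Laplacian to obtain the scaling identity
\[(-\Delta)^{\alpha}\bigl(g(a\,\cdot)\bigr)(x)=a^{2\alpha}\bigl((-\Delta)^{\alpha}g\bigr)(ax),\]
so that it suffices to establish the non-scaled estimate $|(-\Delta)^{\alpha}g(y)|\le C g(y)$ uniformly in $y\in\mathbb{R}^d$. For $|y|\le 1$ this is immediate from $g\in C^\infty_b$ (smoothness together with the polynomial decay makes the singular integral defining $(-\Delta)^{\alpha}g$ bounded) and from $g(y)\ge g(1)>0$. For $|y|>1$ I would split the integral along $|y-z|=|y|/2$: in the far-field $|y-z|>|y|/2$ one bounds $|y-z|^{-d-2\alpha}\le C|y|^{-d-2\alpha}\le C g(y)$ and uses $g\in L^1(\mathbb{R}^d)$; in the near-field one first writes the symmetric form $(-\Delta)^{\alpha}g(y)=\tfrac{1}{2}\int (2g(y)-g(y+h)-g(y-h))|h|^{-d-2\alpha}dh$, uses $\|D^2g\|_\infty<\infty$ to dominate the integrand by $\|D^2g\|_\infty|h|^{2-d-2\alpha}$ for $|h|\le 1$, and for $1<|h|<|y|/2$ observes that the integrand points $y\pm h$ satisfy $|y\pm h|\ge |y|/2$ and so $g(y\pm h)\le C g(y)$.

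For (ii), the scaling $y=x+z/a$ (or directly estimating in $y$) again produces a prefactor $a^{2\alpha}$, and the task reduces to bounding $\int_{\mathbb{R}^d}\tfrac{|g(ax)-g(ay)||\chi(x)-\chi(y)|}{|x-y|^{d+2\alpha}}dy$ by $C a^{-\gamma}g(a|x|)$. I would split the integration region along the threshold $|x-y|\le 1/a$ versus $|x-y|>1/a$, and further split each piece by $|x-y|\le |x|/2$ vs.\ $|x-y|> |x|/2$ to separate the regimes where $g(ay)$ is comparable to $g(ax)$ and where it is not. On $|x-y|\le 1/a$ I use $\chi\in C^\gamma(\mathbb{R}^d)$ (inherited from $\chi\in C^\alpha$ and its boundedness, with $\gamma<\alpha$) to get $|\chi(x)-\chi(y)|\le C|x-y|^\gamma$, and $g\in C^1$ to get $|g(ax)-g(ay)|\le C a|x-y|$ times a factor controlled by $g(a|x|)$ when $|x-y|\le |x|/2$, yielding an integral of $|x-y|^{\gamma+1-d-2\alpha}$ that converges precisely under the hypothesis $2\alpha-\gamma<1$ and produces the factor $(1/a)^{1+\gamma-2\alpha}\cdot a=a^{2\alpha-\gamma}$. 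On $|x-y|>1/a$ I bound $|\chi(x)-\chi(y)|\le 2\|\chi\|_\infty\le 2\|\chi\|_\infty (a|x-y|)^\gamma$ (since $a|x-y|>1$) and $|g(ax)-g(ay)|\le g(ax)+g(ay)$, splitting once more at $|x-y|\le |x|/2$ to use $g(ay)\le C g(ax)$, and at $|x-y|>|x|/2$ to use $\int g(ay)\,dy\le C a^{-d}$ against a denominator of order $|x|^{-d-2\alpha}$; in all cases a factor $a^{2\alpha-\gamma}g(a|x|)$ emerges by a straightforward estimate of $\int_{1/a}^\infty r^{\gamma-1-2\alpha}\,dr$.

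The main technical obstacle is the simultaneous tracking of the decay in $|x|$ (which must be encoded as $g(a|x|)$) and the power of $a$ (which must be exactly $a^{2\alpha-\gamma}$). In each sub-region one has to choose the right bound for $|g(ax)-g(ay)|$: Lipschitz when $|x-y|$ is small (producing an extra $a$), brute force $g(ax)+g(ay)$ when $|x-y|$ is large, and comparability $g(ay)\le C g(ax)$ whenever $|y|\ge |x|/2$; any careless choice would either miss the decay in $|x|$ or spoil the exponent in $a$. The condition $2\alpha-\gamma<1$ appears exactly as the integrability criterion in the near-diagonal region, and periodicity of $\chi$ is used only implicitly through the global $C^\alpha$-bound (hence the global $C^\gamma$-bound with $\gamma<\alpha$) that this periodicity guarantees.
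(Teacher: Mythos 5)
The paper does not actually provide a proof of this lemma; it simply remarks that the same kind of result appears in the appendix of \cite{Leculier_1} and that the argument adapts with no difficulty to the lower regularity of $\chi$. Your proof therefore cannot be compared to one in the paper, but it supplies a self-contained argument, and the substance is correct. For (i), the exact scaling identity $(-\Delta)^{\alpha}\bigl(g(a\,\cdot)\bigr)(x)=a^{2\alpha}\bigl((-\Delta)^{\alpha}g\bigr)(ax)$ does immediately reduce the claim to $|(-\Delta)^\alpha g|\le Cg$, and your far/near split at $|y-z|=|y|/2$ (using $g\in C^2_b\cap L^1$ near the origin and the comparability $g(y\pm h)\le Cg(y)$ for $|h|\le |y|/2$) handles that cleanly. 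For (ii), the double splitting at the thresholds $1/a$ and $|x|/2$, combined with the Lipschitz bound $|g(ax)-g(ay)|\le Ca|x-y|\,g(a|x|)$ when $|x-y|\le|x|/2$ (which requires the observation $|\nabla g|\le Cg$), the brute bound $g(ax)+g(ay)$ otherwise, and the interpolated Hölder bound on $\chi$ at exponent $\gamma<\alpha$, does produce the factor $a^{2\alpha-\gamma}g(a|x|)$ in every sub-region, with the hypothesis $2\alpha-\gamma<1$ entering exactly as the near-diagonal integrability condition — all consistent with what you describe.

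One small slip in your narrative for (ii): you say the rescaling produces a prefactor $a^{2\alpha}$ and reduces the problem to bounding $\int |g(ax)-g(ay)||\chi(x)-\chi(y)|\,|x-y|^{-d-2\alpha}dy$ by $Ca^{-\gamma}g(a|x|)$. But that integral, as written in the original variables $y$, is precisely $|\widetilde K|$ itself, and the goal for it is $Ca^{2\alpha-\gamma}g(a|x|)$, not $Ca^{-\gamma}g(a|x|)$; the prefactor $a^{2\alpha}$ does not factor out as cleanly here because $\chi$ lives at scale $1$ rather than scale $a$. This is only a bookkeeping confusion in the paragraph that introduces the plan, since your subsequent case-by-case estimates are carried out in original variables and do land on $a^{2\alpha-\gamma}$; just rewrite that opening sentence so it does not promise a reduction that the computation never uses.
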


Since, the same kind of result can be found in the appendix A of \cite{Leculier_1}, we do not provide the proof of this lemma. Note that here, the lemma is stated with less regularity on $\chi$ such than in \cite{Leculier_1}. Nevertheless, there is no difficulty to adapt the proof.

\begin{notation}
As we have introduced $(-\Delta)^\alpha_\varepsilon n_\varepsilon (x,t)= (-\Delta)^\alpha n\left( |x|^\frac{1}{\varepsilon}\frac{x}{|x|}, \frac{t}{\varepsilon}\right)$, we introduce 
\[\widetilde{K}_\varepsilon (u,v)(x,t)= \widetilde{K}(u,v)( |x|^{\frac{1}{\varepsilon}-1}, \frac{t}{\varepsilon} ).\]
For any application $h : \mathbb{R}\mapsto \mathbb{R}$, we define 
\begin{align*}
h_\varepsilon : \ & \mathbb{R}^d \rightarrow \mathbb{R}\\
&x \mapsto h(|x|^{\frac{1}{\varepsilon}-1} x).
\end{align*}
For any set $\mathcal{U}$, we will denote
\begin{equation}
 \mathcal{U}^\varepsilon  =  \left\lbrace x \in \mathbb{R}^d | \  |x|^{\frac{1}{\varepsilon}-1}x \in \mathcal{U} \right\rbrace .
\end{equation}
For reasons of brevity, we will always denote $\left( \mathcal{U}_\nu \right)^\varepsilon$ by $\mathcal{U}_\nu^\varepsilon$. 
\end{notation}

\textcolor{black}{In the following, we denote by $\lambda_\nu$ the principal eigenvalue of the Dirichlet operator $(-\Delta)^\alpha - Id$ in $\Omega_\nu$ and $\phi_\nu$ the associated eigenfunction which can be chosen positve. Then, the following proposition hold true.
\begin{proposition}\label{prop_convergence_lambda_nu}
The map $\left( \nu \in ]-r_0, r_0[ \mapsto \lambda_\nu \right)$ is increasing and continuous.
\end{proposition}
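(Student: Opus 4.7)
The plan is to establish monotonicity and continuity separately, both resting on the variational characterization of the principal eigenvalue together with a normal-flow deformation of $\partial\Omega$.

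\textbf{Monotonicity.} For $|\nu| < r_0$ small enough, smoothness of $\partial\Omega$ implies that $\partial\Omega_\nu$ is obtained from $\partial\Omega$ by translating each boundary point along the inward unit normal by $\nu$; in particular $\nu_1 < \nu_2$ forces $\Omega_{\nu_2} \subsetneq \Omega_{\nu_1}$ with $\Omega_{\nu_1} \setminus \overline{\Omega_{\nu_2}}$ of positive measure. I would then write
\[
\lambda_\nu + 1 \;=\; \inf\Bigl\{ \tfrac{C_\alpha}{2} \iint_{\mathbb{R}^d\times\mathbb{R}^d} \frac{|u(x)-u(y)|^2}{|x-y|^{d+2\alpha}}\,dx\,dy \;:\; u \text{ periodic},\ u\equiv 0 \text{ on } \Omega_\nu^c,\ \|u\|_{L^2(\mathrm{cell})}=1 \Bigr\},
\]
the fractional-Rayleigh characterization provided by Krein--Rutman. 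Extension by zero embeds the admissible class at $\nu_2$ into that at $\nu_1$, giving $\lambda_{\nu_1} \le \lambda_{\nu_2}$. Strict inequality follows by the usual simplicity argument: if equality held, $\phi_{\nu_2}$ extended by zero would minimise the Rayleigh quotient on $\Omega_{\nu_1}$ and hence coincide with $\phi_{\nu_1}$ up to a constant, contradicting $\phi_{\nu_1}>0$ on $\Omega_{\nu_1}\setminus\overline{\Omega_{\nu_2}}$.

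\textbf{Continuity at $\nu_0$.} The main construction is a family of periodicity-preserving $C^1$-diffeomorphisms $F_t:\mathbb{R}^d\to\mathbb{R}^d$ for $|t|$ small, realising $F_t(\Omega_{\nu_0}) = \Omega_{\nu_0-t}$. I would build these as the time-$t$ flow of a smooth periodic vector field $V$ that equals the outward unit normal of $\partial\Omega$ on a thin tubular neighbourhood of $\partial\Omega$ and vanishes outside it; the smoothness hypothesis \eqref{Omega} guarantees such a $V$ exists, and periodicity is automatic from $V$ being periodic. One then has uniformly $|\det DF_t - 1| = O(t)$ and $|F_t(x) - F_t(y)| = |x-y|(1+O(t))$ on $\mathbb{R}^d$. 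Transporting $\phi_{\nu_0}$ to $\widehat{\phi}_\nu := \phi_{\nu_0}\circ F_{\nu_0-\nu}$, which is a legitimate periodic test function vanishing on $\Omega_\nu^c$, and performing the change of variables in the double integral yields
\[
\lambda_\nu \;\le\; \lambda_{\nu_0} + O(|\nu-\nu_0|).
\]
Symmetrically, using $\phi_\nu\circ F_{\nu-\nu_0}$ as a test function on $\Omega_{\nu_0}$ gives $\lambda_{\nu_0} \le \lambda_\nu + O(|\nu-\nu_0|)$, and continuity (in fact, Lipschitz continuity) follows.

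\textbf{Main obstacle.} The delicate step is the change of variables in the singular double integral, since $|x-y|^{-d-2\alpha}$ is non-integrable at the diagonal. I would handle it by splitting the inner integration into a diagonal piece $\{|x-y|\le 1\}$ and a tail $\{|x-y|>1\}$: on the tail the kernel is bounded and the error is trivially $O(t)$; on the diagonal piece one uses the H\"older regularity of the eigenfunction (e.g.\ $\phi_{\nu_0}\in C^\alpha$ from \cite{Ros-Oton-Serra_2}) and the fact that $\alpha<1$ to reduce the difference of the integrands to a factor $1+O(t)$ times an integrable quantity. Uniformity in $\nu$ relies on normalising the eigenfunctions in $L^2(\mathrm{cell})$ and invoking a uniform $H^\alpha$ bound from elliptic regularity, both of which are standard.
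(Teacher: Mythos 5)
Your monotonicity argument is the same one the paper gestures at (Rayleigh quotient over nested domains, extension by zero, simplicity for strictness), and it is correct, modulo one notational slip: for a periodic admissible function the double integral $\iint_{\mathbb{R}^d\times\mathbb{R}^d}|u(x)-u(y)|^2|x-y|^{-d-2\alpha}\,dx\,dy$ diverges; you must integrate the outer variable over a fundamental cell only (i.e.\ replace $\iint_{\mathbb{R}^d\times\mathbb{R}^d}$ by $\iint_{Q\times\mathbb{R}^d}$, with $Q$ a period cell), which is the correct periodic fractional Rayleigh quotient and does not affect your inclusion argument. Your continuity argument, on the other hand, takes a genuinely different route from what the paper indicates. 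The paper says continuity ``relies on the uniqueness of the principal eigenvalue,'' which points to a soft compactness argument: normalise $\phi_{\nu_n}$ in $L^2(Q)$, use the uniform $H^\alpha$ bound to extract a weak limit, observe that the limit is a non-negative eigenfunction of the problem on $\Omega_{\nu_0}$ with eigenvalue $\lim\lambda_{\nu_n}$, and conclude by simplicity that the limit is $\phi_{\nu_0}$ and the eigenvalue is $\lambda_{\nu_0}$. You instead build a periodicity-preserving normal-flow diffeomorphism $F_t$ with $F_t(\Omega_{\nu_0})=\Omega_{\nu_0-t}$ and push the eigenfunction forward as a test function, reading off $\lambda_\nu\le\lambda_{\nu_0}+O(|\nu-\nu_0|)$ and its symmetric counterpart directly from the change of variables in the double integral. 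That is a valid and in fact stronger conclusion (Lipschitz continuity, not just continuity), and it bypasses uniqueness entirely. One remark: your ``main obstacle'' paragraph overcomplicates the change of variables. You do not need the H\"older regularity of $\phi_{\nu_0}$ or a diagonal/tail splitting; the estimate $|F_t^{-1}(\xi)-F_t^{-1}(\eta)|=|\xi-\eta|(1+O(t))$ holds uniformly in $(\xi,\eta)$, including near the diagonal, from the $C^1$ closeness of $F_t$ to the identity, so the kernel is multiplied by a factor $1+O(t)$ globally and the numerator $|u(\xi)-u(\eta)|^2$ is untouched. Also make the tapering of the vector field $V$ explicit (it cannot be literally the unit normal on a tubular neighbourhood and identically zero just outside while being smooth), and restrict $|t|$ and $|\nu_0|$ so that the flow identity $F_t(\Omega_{\nu_0})=\Omega_{\nu_0-t}$ holds within the untapered layer.
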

 We do not provide the proof since there is no difficulty: it relies on the Rayleigh quotient for the monotonicity and on the uniqueness of the principal eigenvalue for the continuity. We deduce thanks to \eqref{H1} and Proposition \ref{prop_convergence_lambda_nu} that
\begin{equation}\label{r0}
 \exists r_0 > 0, \text{ such that for all }  \nu \in ]0,r_0[, \quad \lambda_{\nu}<0.
\end{equation} 
Next using the eigenfunctions $\phi_{\pm\nu}$ we establish the sub and super-solution to \eqref{equation_scall}.}

\begin{proposition}\label{sub_sup}
We assume \eqref{H1} and \eqref{H2}. Let $\nu$ be a positive constant such that $\nu< r_0$. If we set $C_m = \min(\frac{\min(|\lambda_{\nu}|, c_m \nu^\alpha)}{2 (\max \  \phi_\nu+1)}, 1)$ and $C_M = \frac{2|\lambda_{-\nu}| + c_M}{\underset{\Omega}{\min}\ \phi_{-\nu}}$ 
then there exists $\varepsilon_\nu > 0$ such that for all $\varepsilon<\varepsilon_\nu$, the following holds true.
\begin{enumerate}
\item  If $f_\varepsilon^m$ is defined as 
\textcolor{black}{\[ f_\varepsilon^m(x,t)= \dfrac{C_m \min(e^{-\frac{1}{\varepsilon}+\frac{\varepsilon t}{4}},1) }{1+e^{-\frac{t}{\varepsilon}(|\lambda_{\nu}|-\varepsilon^2)-\frac{1}{\varepsilon}}|x|^{\frac{d+2\alpha}{\varepsilon}}}  \times (\phi_{\nu,\varepsilon} (x) + \varepsilon)  ,\]}
then it is a sub-solution of \eqref{equation_scall} in $\Omega_\nu^\varepsilon \times \left( ]0,\frac{4}{\varepsilon^2}[ \cup ]\frac{4}{\varepsilon^2}, +\infty[ \right)$.
\item If $f_\varepsilon^M $ is defined as
\[  f_\varepsilon^M(x,t)= \frac{C_M \times \phi_{-\nu,\varepsilon}(x) }{1+e^{\frac{-t}{\varepsilon}(|\lambda_{-\nu}|+\varepsilon^2)-\frac{1}{\varepsilon}}|x|^\frac{d+2\alpha}{\varepsilon}} , \]
then it is a super-solution of \eqref{equation_scall} in $\Omega^\varepsilon \times ]0, +\infty[$.
\item For all $ (x,t) \in \mathbb{R}^d \times [0,+\infty[, \ \ f_\varepsilon^m (x,t) \leq n_\varepsilon(x,t + \varepsilon) + \varepsilon$ and $n_\varepsilon (x,t+ \varepsilon) \leq f_\varepsilon^M (x,t )  .$
\end{enumerate}
\end{proposition}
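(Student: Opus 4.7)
The plan is to undo the rescaling $(y,s)=(|x|^{1/\varepsilon}x/|x|,t/\varepsilon)$, which preserves the Fisher--KPP differential inequality, and to verify the sub- and super-solution properties for the unscaled avatars $F^m(y,s) := h(s)(\phi_\nu(y)+\varepsilon)g_b(y)$ and $F^M(y,s):=C_M\phi_{-\nu}(y)g_{b_M}(y)$, where $h(s)=C_m\min(e^{-1/\varepsilon+\varepsilon^2s/4},1)$, $g_b(y)=1/(1+b(s)|y|^{d+2\alpha})$, $b(s)=e^{-s(|\lambda_\nu|-\varepsilon^2)-1/\varepsilon}$, and $b_M(s)=e^{-s(|\lambda_{-\nu}|+\varepsilon^2)-1/\varepsilon}$. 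The reason for working in unscaled variables is twofold. First, $b(s),b_M(s)\leq e^{-1/\varepsilon}$ on $[0,+\infty[$, so Lemma \ref{lemme_utile} makes every non-local contribution coming from $g_b$ exponentially small in $1/\varepsilon$; the sub/super-solution condition then reduces to an essentially ODE-like balance driven by the eigenfunctions. Second, the exponential smallness of $F^m(y,0)$ and the algebraic tails of $n(y,1)$ obtained in Section 4 make the initial-data comparison in point (iii) almost automatic.

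For the sub-solution (point 1), I would apply the product formula $(-\Delta)^\alpha(uv)=(-\Delta)^\alpha u\cdot v+u(-\Delta)^\alpha v-\widetilde K(u,v)$ together with $(-\Delta)^\alpha\phi_\nu=(1+\lambda_\nu)\phi_\nu=(1-|\lambda_\nu|)\phi_\nu$ in $\Omega_\nu$ and the direct identity $\partial_s g_b=(|\lambda_\nu|-\varepsilon^2)(1-g_b)g_b$. After the cancellations, the quantity $F^m(1-F^m)-\partial_s F^m-(-\Delta)^\alpha F^m$, divided by $g_b>0$, collapses to the two positive leading terms $h\varepsilon(1-|\lambda_\nu|)+h|\lambda_\nu|(\phi_\nu+\varepsilon)g_b$, plus four remainders: the quadratic $-h^2(\phi_\nu+\varepsilon)^2g_b$, the time-derivative term $-h'(\phi_\nu+\varepsilon)$, and the two non-local corrections $-h(\phi_\nu+\varepsilon)(-\Delta)^\alpha g_b/g_b$ and $h\widetilde K(\phi_\nu,g_b)/g_b$. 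The choice $C_m\leq|\lambda_\nu|/(2(\max\phi_\nu+1))$ absorbs the quadratic remainder into half of the second leading term; the bound $|h'|\leq (\varepsilon^2/4)h$ holding on each of $]0,4/\varepsilon^3[$ and $]4/\varepsilon^3,+\infty[$ (which is what forces the exclusion of $t=4/\varepsilon^2$ in the statement) absorbs the third remainder into half of the first leading term for $\varepsilon$ small; and Lemma \ref{lemme_utile} bounds the last two remainders by $O(he^{-c/\varepsilon})$, negligible against $h\varepsilon(1-|\lambda_\nu|)$. Note that $|\lambda_\nu|\in(0,1)$ is automatic, since $\lambda_\nu$ is the principal eigenvalue of $(-\Delta)^\alpha-\operatorname{Id}$ and $(-\Delta)^\alpha$ has positive spectrum on $\Omega_\nu$.

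The super-solution (point 2) follows by the same calculation with $(-\Delta)^\alpha\phi_{-\nu}=(1-|\lambda_{-\nu}|)\phi_{-\nu}$ valid in $\Omega_{-\nu}\supset\Omega$; the dominant term of $F^M(1-F^M)-\partial_s F^M-(-\Delta)^\alpha F^M$ becomes $g_{b_M}\phi_{-\nu}(|\lambda_{-\nu}|-C_M\phi_{-\nu})$, which is non-positive on $\Omega$ because the choice $C_M=(2|\lambda_{-\nu}|+c_M)/\min_\Omega\phi_{-\nu}$ forces $C_M\phi_{-\nu}\geq 2|\lambda_{-\nu}|+c_M$; the remaining $\varepsilon^2$ and Lemma \ref{lemme_utile} corrections are absorbed as before. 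For point (iii), I apply the comparison principle on each of $\Omega_\nu^\varepsilon\times]0,4/\varepsilon^2[$ and $\Omega_\nu^\varepsilon\times]4/\varepsilon^2,+\infty[$ separately and pass the inequality through the transition by continuity of $f_\varepsilon^m$. The required boundary/initial comparisons are: (a) at $t=0$, $f_\varepsilon^m(x,0)\leq C_m e^{-1/\varepsilon}(\max\phi_\nu+1)\leq\varepsilon$ for $\varepsilon$ small, while outside $\Omega_\nu^\varepsilon$ the eigenfunction factor vanishes so $f_\varepsilon^m\leq C_m h_\varepsilon\varepsilon\leq\varepsilon$ there as well, yielding $f_\varepsilon^m\leq n_\varepsilon(\cdot,\cdot+\varepsilon)+\varepsilon$ on both parabolic boundaries; (b) for the upper bound, the algebraic-tail estimate $n(y,1)\leq c_M\delta(y)^\alpha/(1+|y|^{d+2\alpha})$ from Section 4 combined with $\phi_{-\nu}\geq\min_\Omega\phi_{-\nu}>0$ on $\Omega$ and the explicit value of $C_M$ gives $f_\varepsilon^M(x,0)\geq n_\varepsilon(x,\varepsilon)$, while outside $\Omega^\varepsilon$ both functions vanish.

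The main obstacle is the degeneracy of $\phi_\nu$ on $\partial\Omega_\nu$: near that boundary, the term $h|\lambda_\nu|(\phi_\nu+\varepsilon)g_b$ cannot on its own control $h^2(\phi_\nu+\varepsilon)^2 g_b$ uniformly, and no margin can be extracted from the eigenfunction alone. The $+\varepsilon$ in the numerator of $f_\varepsilon^m$ is the precise device that rescues the inequality, generating the uniform positive contribution $h\varepsilon(1-|\lambda_\nu|)$ which absorbs every boundary-degenerate remainder. The price is that $f_\varepsilon^m$ does not vanish on $\partial\Omega_\nu^\varepsilon$, which is why the third assertion has to be stated with a shift by $\varepsilon$ rather than comparing $f_\varepsilon^m$ directly with $n_\varepsilon$.
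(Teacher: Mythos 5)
Your proof follows essentially the same route as the paper's; you simply undo the $\varepsilon$-scaling and work with $F^m(y,s)=f_\varepsilon^m\bigl(|y|^{\varepsilon-1}y,\varepsilon s\bigr)$, so the product rule, the identity $(-\Delta)^\alpha\phi_\nu=(1+\lambda_\nu)\phi_\nu$ on $\Omega_\nu$, the absorption of the quadratic remainder via the choice of $C_m$, the time-split at $s=4/\varepsilon^3$ (i.e.\ $t=4/\varepsilon^2$) to control $h'/h$, and the use of Lemma~\ref{lemme_utile} to render the two nonlocal remainders $O(e^{-c/\varepsilon})$ are exactly the paper's computations in the other coordinate frame. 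The only substantive deviation is in point~(3): you observe $f_\varepsilon^m(\cdot,0)\le C_m e^{-1/\varepsilon}(\max\phi_\nu+1)\le\varepsilon$ directly, since $h(0)=C_m e^{-1/\varepsilon}$ and the denominator of $g_b(\cdot,0)$ is $\ge 1$, which bypasses the paper's longer chain through the factor $c_m\nu^\alpha$ built into $C_m$ together with the lower bound \eqref{NewInitialData}; this is a genuine (and correct) simplification of that one comparison. Everything else, including the treatment of the super-solution and the continuity argument across the transition time, matches the paper.
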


\begin{proof}
We begin by proving \textit{(1)}. We split the study into two parts : when $t \in ]0, \frac{4}{\varepsilon^2}[$ and $t \in ]\frac{4}{\varepsilon^2}, +\infty[$. Let $(x,t)$ be in $\Omega_\nu^\varepsilon  \times ]0, \frac{4}{\varepsilon^2}[$. We define:
\begin{align*}
\psi_\varepsilon (x,t)=&\dfrac{C_m}{1+e^{-\frac{t}{\varepsilon}(|\lambda_{\nu}|-\varepsilon^2)-\frac{1}{\varepsilon}}|x|^\frac{d+2\alpha}{\varepsilon}} = C_m g_\varepsilon(e^{-\frac{t(|\lambda_{\nu}|-\varepsilon^2)-1}{d+2\alpha}}x) \quad \text{ and } \quad a(t) = e^{-\frac{1}{\varepsilon} + \frac{\varepsilon t}{4}}\\
\qquad \text{ thus }	&\qquad f_\varepsilon^m(x,t) =a(t) \times \psi_\varepsilon (x,t) \times(\phi_{\nu,\varepsilon} (x) + \varepsilon).
\end{align*}
First, we bound $\varepsilon \partial_t  \psi_\varepsilon$ from above:
\begin{equation}\label{derive_temporelle}
\begin{aligned}
\varepsilon \partial_t \psi_\varepsilon (x,t)& =   \varepsilon \dfrac{C_m \frac{(|\lambda_{\nu}|-\varepsilon^2)}{\varepsilon}e^{-\frac{t}{\varepsilon}(|\lambda_{\nu}|-\varepsilon^2)-\frac{1}{\varepsilon}}|x|^{\frac{d+2\alpha}{\varepsilon}}}{(1+e^{-\frac{t}{\varepsilon}(|\lambda_{\nu}|-\varepsilon^2)-\frac{1}{\varepsilon}}|x|^{\frac{d+2\alpha}{\varepsilon}})^2} \\
&= \psi_\varepsilon (x,t) \left[(|\lambda_{\nu}|- \varepsilon^2)\frac{e^{-\frac{t}{\varepsilon}(|\lambda_{\nu}|-\varepsilon^2)-\frac{1}{\varepsilon}}|x|^{\frac{d+2\alpha}{\varepsilon}}}{1+e^{-\frac{t}{\varepsilon}(|\lambda_{\nu}|-\varepsilon^2)-\frac{1}{\varepsilon}}|x|^{\frac{d+2\alpha}{\varepsilon}}} \right] \\
& \leq \psi_\varepsilon (x,t) [|\lambda_{\nu}| - \varepsilon^2 - \psi_\varepsilon(x,t) (\phi_{\nu,\varepsilon}(x)+\varepsilon)] \\
& \leq \psi_\varepsilon (x,t) [|\lambda_{\nu}| - \varepsilon^2 - f_\varepsilon(x,t)] .
\end{aligned}
\end{equation}
The last inequalities hold because $a(t) \leq 1$ and denoting by $D = e^{-\frac{t}{\varepsilon}(|\lambda_{\nu}|-\varepsilon^2)-\frac{1}{\varepsilon}}|x|^\frac{d+2\alpha}{\varepsilon}$ and using the definition of $C_m$, we obtain for all $\varepsilon < \min(\sqrt{\frac{|\lambda_\nu|}{2}},1)$
\begin{align*}
|\lambda_\nu| - \varepsilon^2-\psi_\varepsilon (\phi_{\nu,\varepsilon} + \varepsilon) - \left( |\lambda_\nu| - \varepsilon^2\right) \frac{D}{1+D} &=\frac{|\lambda_{\nu}| - \varepsilon^2-C_m (\phi_{\nu, \varepsilon} + \varepsilon)}{1+D}\\
&\geq \frac{|\lambda_\nu| - \varepsilon^2 - \frac{|\lambda_\nu|}{2}}{1+D}\\
&\geq 0.
\end{align*}
Next, we compute $(-\Delta)^\alpha_\varepsilon f^m_\varepsilon (x,t)$ 
\begin{equation*}   
(-\Delta)^\alpha_{\varepsilon} f^m_\varepsilon (x,t) =a(t) (\phi_{\nu,\varepsilon} + \varepsilon)(x)(-\Delta)^\alpha_{\varepsilon} \psi_\varepsilon(x,t) +  a(t) \psi_\varepsilon(x,t)(-\Delta)^\alpha_{\varepsilon} \phi_{\nu,\varepsilon}(x)  - a(t) \widetilde{K}_\varepsilon(\psi, (\phi_{\nu}+\varepsilon) )(x,t).
\end{equation*}
Combining \eqref{derive_temporelle} and the above equality we find:

\begin{equation}\label{in_1_f}
\begin{aligned}
&\varepsilon \partial_t f^m_\varepsilon (x,t) + (-\Delta)^\alpha_{\varepsilon} f^m_\varepsilon(x,t) - f^m_\varepsilon(x,t)(1 - f^{m}_{\varepsilon}(x,t)) \\
&\leq \textcolor{black}{ f^m_\varepsilon(x,t) (|\lambda_{\nu}|-\frac{3\varepsilon^2}{4} - f^m_\varepsilon(x,t)) } +a(t) (\phi_{\nu,\varepsilon}(x) + \varepsilon ) (-\Delta)^\alpha_\varepsilon \psi_\varepsilon(x,t) +  a(t)\psi_\varepsilon(x,t)(-\Delta)^\alpha_{\varepsilon} \phi_{\nu,\varepsilon}(x)\\
&-a(t)\widetilde{K}_\varepsilon(\psi, \phi_{\nu} +\varepsilon)(x,t) - f^m_\varepsilon(x,t)(1-f^m_\varepsilon(x,t)) \\
&= f^m_\varepsilon (x,t)(|\lambda_{\nu}|-\frac{3\varepsilon^2 }{4} ) + a(t)(\phi_{\nu,\varepsilon}(x) + \varepsilon) (-\Delta)^\alpha_\varepsilon \psi_\varepsilon(x,t) + a(t) (\lambda_\nu +1)\psi_\varepsilon(x,t) \phi_{\nu,\varepsilon}(x)  \\
&- f_\varepsilon^m(x,t) -a(t)\widetilde{K}_\varepsilon(\psi, \phi_{\nu} +\varepsilon )(x,t) \\
&= f^m_\varepsilon(x,t) (|\lambda_{\nu}|-\frac{3\varepsilon^2 }{4}) + a(t)(\phi_{\nu,\varepsilon}(x) + \varepsilon) (-\Delta)^\alpha_\varepsilon \psi_\varepsilon(x,t) +  a(t)(\lambda_\nu +1)\psi_\varepsilon(x,t) (\phi_{\nu, \varepsilon}(x)+\varepsilon) (x,t) \\
&  - a(t)\varepsilon (\lambda_\nu +1)\psi_\varepsilon(x,t)  - f_\varepsilon^m (x,t)-a(t)\widetilde{K}_\varepsilon(\psi, \phi_{\nu} +\varepsilon)(x,t) \\
&= -\frac{3\varepsilon^2}{4}  f^m_\varepsilon  (x,t)+ a(t)(\phi_{\nu,\varepsilon}(x) + \varepsilon) (-\Delta)^\alpha_\varepsilon \psi_\varepsilon(x,t)  -a(t) \varepsilon (\lambda_\nu +1)\psi_\varepsilon(x,t)  -a(t) \widetilde{K}_\varepsilon(\psi, \phi_{\nu}+\varepsilon)(x,t).
\end{aligned}
\end{equation}
Thanks to Lemma \ref{lemme_utile}, we obtain
\begin{align*}
|(-\Delta)^\alpha_\varepsilon \psi_\varepsilon(x,t)|&=|C_m  (-\Delta)_\varepsilon^\alpha (g_\varepsilon(e^{\frac{-[t(|\lambda_\nu|-\varepsilon^2)+1]}{d+2\alpha}} .))(x)|\\
& \leq |C_m e^{-\frac{2\alpha [t(|\lambda_\nu|-\varepsilon^2)+1]}{\varepsilon(d+2\alpha)}}  (g_\varepsilon)(e^{\frac{-[t(|\lambda_\nu|-\varepsilon^2)+1]}{d+2\alpha} } x)|.
\end{align*}
We deduce that there exists $\varepsilon_1 > 0$ such that for all $\varepsilon< \varepsilon_1$:
\begin{equation}\label{probleme_1}
|(-\Delta)^\alpha_\varepsilon \psi_\varepsilon(x,t)| \leq \frac{\varepsilon^2}{4} \psi_\varepsilon (x,t).
\end{equation}
Since $(\phi_\nu+\varepsilon)$ is periodic, positive and $C^\alpha$ according to \cite{Ros-Oton-Serra} (Proposition 1.1), we conclude from Lemma \ref{lemme_utile} that there exists $\gamma \in ]0, \alpha[$ and a constant $C$ such that
\[|\widetilde{K}_\varepsilon (\psi,\phi_\nu+\varepsilon)(x,t)|\leq C e^{-\frac{[t(|\lambda_\nu| - \varepsilon^2 )+1](2\alpha-\gamma)}{\varepsilon(d+2\alpha)}} \psi_\varepsilon (x,t).\]
We deduce the existence of $\varepsilon_2>0$ such that for all $\varepsilon<\varepsilon_2$, we have
\begin{equation}\label{probleme_2}
|\widetilde{K}_\varepsilon (\psi,\phi_\nu+\varepsilon)(x,t)|\leq \frac{\varepsilon^3}{4}  \psi_\varepsilon(x,t) = \frac{ \varepsilon^2\min (\phi_{\nu, \varepsilon } + \varepsilon)}{4} \psi_\varepsilon(x,t) .
\end{equation}
Noticing that $(\lambda_\nu +1)>0$, inserting \eqref{probleme_1} and \eqref{probleme_2} into \eqref{in_1_f}, we conclude that for all $\varepsilon< \varepsilon_\nu:= \min (\varepsilon_1, \varepsilon_2, \sqrt{ \frac{|\lambda_\nu|}{2}},1)$ and $(x,t) \in \Omega_\nu^\varepsilon\times [0,+\infty[$ we have:
\begin{align*}
&\varepsilon \partial_t f^m_\varepsilon (x,t) + (-\Delta)^\alpha_{\varepsilon } f^m_\varepsilon (x,t)- f^m_\varepsilon (x,t)+ {f^m_\varepsilon}(x,t)^2   \\
& \leq -\frac{3\varepsilon^2}{4} f_\varepsilon^m(x,t) + a(t)(\phi_{\nu, \varepsilon}+\mu) (x)  (-\Delta)^\alpha \psi_\varepsilon(x,t) - a(t)\varepsilon(\lambda_\nu +1) \psi_\varepsilon(x,t)  - a(t)\widetilde{K}_\varepsilon (\psi, \phi_\nu + \varepsilon) (x,t) \\
& \leq -\frac{3\varepsilon^2}{4} f_\varepsilon^m(x,t) + \frac{\varepsilon^2}{4} f_\varepsilon(x,t) + \frac{\varepsilon^2}{4} f_\varepsilon(x,t)  \\
&\leq -\frac{\varepsilon^2}{4} f^m_\varepsilon (x,t) \\
& \leq 0.
\end{align*}
Therefore, $f_\varepsilon^m$ is a sub-solution of \eqref{equation_scall} for $(x,t) \in \Omega_\nu^\varepsilon \times ]0, \frac{4}{\varepsilon^2}[$.

\bigbreak

We conclude the proof of \textit{(1)} with the same computations where we replace $a(t)$ by $1$. It turns out that for all $(x,t) \in \Omega_\nu^\varepsilon \times]\frac{4}{\varepsilon^2}, +\infty[$ we have 
\[\varepsilon \partial_t f^m_\varepsilon (x,t) + (-\Delta)^\alpha_{\varepsilon } f^m_\varepsilon (x,t)- f^m_\varepsilon (x,t)+ {f^m_\varepsilon}(x,t)^2  \leq -\frac{\varepsilon^2}{2} f^m_\varepsilon (x,t) < 0.\] 

\bigbreak

The proof of \textit{(2)} follows the same arguments as the proof of \textit{(1)}.

\bigbreak

For the proof of \textit{(3)}, we have to check that the initial data are ordered in the right way. According to \eqref{NewInitialData} and the definition of $C_m$, we have that for all $x \in \Omega_\nu^\varepsilon$, 
\[ f_\varepsilon^m (x,0) = \frac{C_m (\phi_{\nu,\varepsilon}(x) + \varepsilon)}{e^\frac{1}{\varepsilon} + |x|^{\frac{d+2\alpha}{\varepsilon}}} \leq \frac{C_m (\max \phi_\nu + \varepsilon)}{1+|x|^{\frac{d+2\alpha}{\varepsilon}}} \leq \frac{ c_m \nu^{\alpha} + \varepsilon }{1+|x|^{\frac{d+2\alpha}{\varepsilon}}} \leq \frac{ c_m \delta(x)^{\alpha}}{1+|x|^{\frac{d+2\alpha}{\varepsilon}}} + \varepsilon\leq n_\varepsilon(x, \varepsilon) +\varepsilon .\]
Furthermore,
\begin{equation}\label{eqth4}
\forall (x,t) \in (\Omega_\nu^\varepsilon)^c\times [0,+\infty[, \text{  we know that }  f_\varepsilon^m(x,t) \leq \varepsilon \leq n_\varepsilon(x,t+ \varepsilon) + \varepsilon.
\end{equation}
Thus we conclude from the comparison principle that for all $(x,t) \in \mathbb{R}^d \times [0,\frac{4}{\varepsilon^2}]$, we have 
\begin{equation} 
f_\varepsilon^m (x,t) \leq n_\varepsilon(x,t + \varepsilon) + \varepsilon.
\end{equation}
\textcolor{black}{Since, we have that for all $x \in \mathbb{R}^d$  
\[f^m_\varepsilon(x, \frac{4}{\varepsilon^2}) \leq n_\varepsilon (x, \frac{4}{\varepsilon^2} + \varepsilon) + \varepsilon\]
and recalling that $f^m_\varepsilon$ is also a subsolution in $\Omega^\varepsilon_\nu \times ]\frac{4}{\varepsilon^2}, +\infty[$ and the inequality \eqref{eqth4}, we deduce thanks to the comparison principle that for all $(x,t) \in \mathbb{R}^d \times [0,+\infty[$}
\begin{equation}\label{sub_solution}  
f_\varepsilon^m (x,t) \leq n_\varepsilon(x,t + \varepsilon) + \varepsilon.
\end{equation}

\bigbreak

The other inequality can be obtained following similar arguments.
\end{proof}

A direct consequence of \eqref{sub_solution} is that if $\varepsilon$ fulfills the assumption of Proposition \ref{sub_sup} then 
\begin{equation}\label{sub_solution2}
\forall (x,t) \in \mathbb{R}^d \times ]\frac{4}{\varepsilon^2}, +\infty[ \quad \dfrac{C_m \times \phi_{\nu,\varepsilon} (x)}{1+e^{-\frac{t}{\varepsilon}(|\lambda_{\nu}|-\varepsilon^2)}|x|^{\frac{d+2\alpha}{\varepsilon}}}   \leq n_\varepsilon (x , t + \varepsilon) + \varepsilon.
\end{equation}

Next, we establish some consequences of Theorem \ref{sub_sup} on the solution $n$ without the scaling \eqref{scale}.

\begin{proof}[Proof of Theorem \ref{Cor2}]
\textcolor{black}{First, we prove the first point by using the sub-solution $f_\varepsilon^m$. \textcolor{black}{It is sufficient to prove it for $\nu < r_0$ (where $r_0$ is introduced in \eqref{r0}).}\\
\textit{Proof of 1. } Set $\nu \in ]0, r_0[$ and $c < \frac{|\lambda_0|}{d+2\alpha}$. According to Proposition \ref{prop_convergence_lambda_nu}, there exists two positive constants $\nu_0 < \nu$ and $\varepsilon_0>0$ such that, \textcolor{black}{for all $\varepsilon < \varepsilon_0$}
\begin{equation}\label{pfCorollary1}
(d+2\alpha)c-|\lambda_\frac{\nu_0}{2}| + \varepsilon^2<0.
\end{equation}
Moreover by Proposition \ref{sub_sup}, \textcolor{black}{for all $\nu \in ]0, r_0[$}, there exists $\varepsilon_\nu>0$ such that for all $\varepsilon< \varepsilon_\nu$ and all  $(x,t) \in  \mathbb{R} \times ]\frac{4}{\varepsilon^2}, +\infty[$, \eqref{sub_solution} holds true. Therefore, for $\varepsilon = \frac{\min \left(\varepsilon_0, \varepsilon_\frac{\nu_0}{2}, \frac{C_m \underset{\Omega_{\nu_0}}{\min} \phi_\frac{\nu_0}{2} }{4} \right)}{2}$ we deduce thanks to \eqref{sub_solution2} that for all $(x,t) \in \left( \Omega_{\nu}^\varepsilon \cap \left\lbrace |x| < e^{ct} \right\rbrace \right) \times ]\frac{4}{\varepsilon^2}, +\infty[$ we have 
\begin{equation*}
  \frac{C_m  \phi_\frac{\nu_0}{2}(|x|^{\frac{1}{\varepsilon} - 1}x) }{1+e^{\frac{-t(|\lambda_\frac{\nu_0}{2}|-\varepsilon^2)}{\varepsilon}}|x|^{\frac{d+2\alpha}{\varepsilon}}} \leq  n_\varepsilon(x,t + \varepsilon) + \varepsilon \quad \Rightarrow \quad   \frac{C_m \  \underset{\Omega_{\nu_0}}{\min} \ \phi_\frac{\nu_0}{2}}{1+e^{\frac{t}{\varepsilon}(c-|\lambda_\frac{\nu_0}{2}|+\varepsilon^2)}} \leq n_\varepsilon(x,t+ \varepsilon) + \varepsilon. 
\end{equation*}
If we perform the inverse scaling to \eqref{scale}, it follows thanks to \eqref{pfCorollary1} that for all $(x,t) \in \textcolor{black}{\left(\Omega_\nu \cap \left\lbrace |x| < e^{ct} \right\rbrace \right)} \times ]\frac{4}{\varepsilon^3} + 1, +\infty[$
\[\frac{C_m  \ \underset{\Omega_{\nu_0}}{\min}\  \phi_\frac{\nu_0}{2} }{2} \leq \frac{C_m \  \underset{\Omega_{\nu_0}}{\min} \  \phi_\frac{\nu_0}{2} }{1+e^{t(c-|\lambda_\frac{\nu_0}{2}|+\varepsilon^2)}} \leq n(x,t+1) + \varepsilon.\]
If we define $\sigma = \frac{C_m \  \underset{\Omega_{\nu_0}}{\min} \  \phi_\frac{\nu_0}{2} }{4}$ and $t_{\sigma}  = \frac{4}{\varepsilon^3}+1$, we conclude that \eqref{step1theorem4} holds true.}

\bigbreak

\textcolor{black}{We prove the second point by using the super-solution $f^M_\varepsilon$. \\
\textit{Proof of 2. } Let $C>\frac{|\lambda_0|}{d+2\alpha}$.  According to Proposition \ref{prop_convergence_lambda_nu}, we deduce the existence of $\nu, \varepsilon > 0$ such that 
\[ |\lambda_0 | < |\lambda_{-\nu} | + \varepsilon^2 < (d+2\alpha)C \quad \text{ and } \quad \text{ Proposition \ref{sub_sup} holds true}.\]
Proposition \ref{sub_sup} implies that 
\[\forall (x,t) \in \mathbb{R}^d \times ]0, +\infty[, \quad n_\varepsilon (x ,t+ \varepsilon) \leq f^M_\varepsilon (x,t).\]
If we perform the  scaling $\left( (x,t) \mapsto (|x|^{\varepsilon -1} x, \varepsilon t) \right)$, it follows that 
\[n(x,t+1) \leq \frac{C_M \times \max \phi_{-\nu}}{1+e^{-t(|\lambda_{-\nu}|+\varepsilon^2) - \frac{1}{\varepsilon}}|x|^{d+2\alpha}}.\]
Then for all $(x,t) \in \left\lbrace |x|>Ct \right\rbrace \times ]0, +\infty[$ we have 
\[n(x,t + 1 ) \leq \frac{C_M \times \max \phi_{-\nu}}{1+e^{t[(d+2\alpha)C-(|\lambda_{-\nu}|+\varepsilon^2)] - \frac{1}{\varepsilon}}}.\]
Defining $\overline{C}:= 2C_Me^{\frac{1}{\varepsilon}} \max \phi_{-\nu}$ and $\kappa :=(d+2\alpha)C-(|\lambda_{-\nu}|+\varepsilon^2)$ then the conclusions follows. 
}
\end{proof}

\subsection{The final argument}

\begin{proof}[Proof of Theorem \ref{theorem_convergence_1}]

\textcolor{black}{We will prove $(i)$ by splitting the proof into two parts : the upper bound and the lower bound. We will not provide the proof of $(ii)$ since it is a direct application of \textit{2.} of Theorem \ref{Cor2}. }

\bigbreak

\textcolor{black}{\textit{Proof of $(i)$.} Let $\mu $ be a positive constant. We want to prove that there exists a time $t_\mu>0$ such that for any $c < \frac{|\lambda_0|}{d+2\alpha}$ we have for all $(x,t) \in  \left\lbrace |x| < e^{ct} \right\rbrace \times ]t_\mu, +\infty[$ 
\[ |n(x,t) - n_+(x) | \leq \mu.\]
First we establish that there exists a time $t_1>0$ such that 
\begin{equation}\label{pf3obj1}
\forall (x,t) \in \Omega \times ]t_1, +\infty[, \quad n(x,t) - n_+(x) \leq \mu 
\end{equation}
Next, we prove the existence of a time $t_2>0$ such that 
\begin{equation}\label{pf3obj2}
\forall (x,t) \in \Omega \times ]t_2, +\infty[, \quad-\mu \leq n(x,t) - n_+(x)
\end{equation}
The difficult part will be to establish \eqref{pf3obj2}. This is why, we do not provide all the details of the proof of \eqref{pf3obj1}. }\\

\textcolor{black}{\textbf{Proof that \eqref{pf3obj1} holds true. } Thanks to \eqref{NewInitialData} and Proposition \ref{proposition:shape}, we deduce the existence of a constant $C\geq 1$ such that 
\[n(x,t=1) \leq Cn_+(x).\]
Moreover, the solution $\overline{n}$ of 
\begin{equation*}
\left\lbrace
\begin{aligned}
&\partial_t \overline{n} + (-\Delta)^\alpha \overline{n}= \overline{n}- \overline{n} && \text{ in } \Omega \times ]1, +\infty[, \\
& \overline{n}(x,t) = 0 && \text{ in }  \Omega^c \times [1, +\infty[, \\
&\overline{n}(x, t=1) = Cn_+(x) && \text{ in }  \Omega
\end{aligned}
\right.
\end{equation*}
is a super solution of \eqref{equation_1}. According to the comparison principle we deduce that 
\begin{equation}\label{inpf3}
\forall (x,t) \in \mathbb{R} \times [1, +\infty[, \quad n(x,t) \leq \overline{n}(x,t).
\end{equation}
One can easily observe that $\overline{n}$ is periodic, decreasing in time and converges uniformly to $n_+$ in the whole domain $\Omega$ as $t \rightarrow +\infty$. Thus there exists a times $t_1>1$ such that 
\[\forall (x,t) \in \Omega \times ]t_1, +\infty[, \quad \overline{n}(x,t) - n_+(x) \leq \mu.\]
The conclusion follows. }

\textcolor{black}{\textbf{Proof that \eqref{pf3obj2} holds true. } We split this part of the proof into two subparts, what happens on the boundary and what happens in the interior.\\
\textit{The boundary estimates. } Since $\overline{n}$ is decreasing in time and thanks to \eqref{inpf3}, we deduce that for all $(x,t) \in \Omega \times [1, +\infty[$
\[|n(x,t) - n_+(x) | \leq n(x,t) + n_+(x) \leq \overline{n}(x,t) + n_+(x) \leq (C + 1) n_+(x).\]
According to Proposition \ref{proposition:shape}, we deduce that for all $(x,t) \in \Omega \times [1,+\infty[$
\[|n(x,t) - n_+(x) | \leq C(C+1) \delta(x)^\alpha.\]
We conclude that for all $(x,t) \in \Omega \times [1,+\infty[$ such that $\delta(x) < \left( \frac{\mu}{C(C+1)} \right)^\frac{1}{\alpha} := \nu_1$ we have 
\[|n(x,t) - n_+(x) | \leq \mu. \]}

\textcolor{black}{\textit{The interior estimates. } Thanks to Theorem \ref{theorem_existence_uniqueness_stationnary_state}, we know that $n_+ \leq 1$ thus it is sufficient to prove the existence of $t_2>0$ such that 
\[\forall (x,t) \in \left( \left\lbrace |x|< e^{ct}  \right\rbrace \cap \Omega_{\nu_2} \right) \times ]t_2, +\infty[ \quad 1-\mu \leq \frac{n(x,t)}{n_+(x)} \qquad \text{ where } \nu_2 = \min(\nu_1, r_0,r_1) \]
\textcolor{black}{where $\nu_1$ is provided by the previous step, $r_0$ by \eqref{r0} and $r_1$ by the uniform interior ball condition.}
}

\bigbreak

\textcolor{black}{The idea is to approximate $n_+$ by the solution of \eqref{equation_1_stat_1} on a ball of radius $M$. Noticing that thanks to \eqref{H1}, there exists $M_0>0$ such that for $M > M_0$, there exists a unique bounded positive solution $n_{M,+}$ of 
\begin{equation}\label{nM+}
\left\lbrace
\begin{aligned}
(-\Delta)^\alpha n_{M,+}&= n_{M,+}- n_{M,+}^2 && \text{ in } \Omega \cap B(0,M),\\
n_{M,+} &= 0 && \text{ in } (\Omega \cap B(0,M))^c
\end{aligned}
\right.
\end{equation} 
We claim that
\begin{equation}\label{tmu2}
\exists M_1>M_0, \text{ such that } \forall M > M_1, \ \forall x \in\Omega_{0,\nu_2}, \ \left(1-\mu\right)^{\frac{1}{2}} \leq \frac{ n_{M,+}(x)}{n_+ (x) }.
\end{equation}
The proof of this claim is postponed to the end of this paragraph. Next, we approach $n_{M,+}$ by the long time solution of the following equation:
\begin{equation}\label{nM}
\left\lbrace
\begin{aligned}
\partial_t n_{M,z} + (-\Delta)^\alpha n_{M,z}&= n_{M,z}- n_{M,z}^2 && \text{ in } \left(\Omega \cap B(0,M) \right) \times ]0, +\infty[,\\
n_{M,z}(x,t) &= 0 && \text{ in }  (\Omega \cap B(0,M))^c \times ]0, +\infty[,\\
n_{M,z}(x,t=0)&= \sigma 1_{B(z,\frac{\nu_2}{4})}(x).
\end{aligned}
\right.
\end{equation}
where $\sigma$ is provided by Theorem \ref{Cor2} and $z \in \Omega_{0, \frac{\nu_2}{2}}$ will be fixed later on. We claim that 
\begin{equation}\label{t_mu}
\exists t_\mu > 0, \text{ such that } \forall z \in \Omega_{0, \frac{\nu_2}{2}}, \ \forall (x,t) \in \Omega_{0, \nu_2} \times ] t_\mu, +\infty[ , \quad (1-\mu)^\frac{1}{2} \leq \frac{n_{M,z}(x,t)}{n_{M,+}(x)}.
\end{equation}
Again, the proof of this claim is postponed to the end of this section.}
\textcolor{black}{Next, we define
\begin{equation}\label{definition_t_mu}
\underline{t}_\mu = t_\mu + t_\sigma
\end{equation}
where $t_\mu$ is defined by \eqref{t_mu} and $t_\sigma$ by Theorem \ref{Cor2}. Let $(x,t)$ be any couple of $\left( \Omega_{\nu_2} \cap \left\lbrace |x| < e^{ct} \right\rbrace \right) \times ]\underline{t}_\mu, +\infty[$. Let $j \in \mathbb{Z}^d$ be such that $x \in \Omega_{0} + a_j$. Since $\nu_2 < r_1$ (the radius of the uniform interior ball condition), we deduce the existence of  $z_x \in \Omega_{0, \frac{\nu_2}{2}}  $ such that 
\begin{equation}\label{zx} 
x \in B(z_x + a_j, \frac{\nu_2}{4}) \text{ and  } \forall y \in B(z_x + a_j, \frac{\nu_2}{4}) \text{ there holds } y \in  (\Omega_{0} + a_j)_{\frac{\nu}{4}} \cap \left\lbrace |y| < e^{ct} \right\rbrace. 
\end{equation} 
Remarking that $\frac{n(x,t)}{n_+(x)} = \frac{n(x,t)}{n_+(x-a_j)}$, we are going to control each terms of the following decomposition: 
\begin{equation*}
 \frac{n(x,t)}{n_+(x)}  = \frac{n(x,t)}{n_{M,z_x}(x-a_j ,t-t_\sigma)} \times \frac{n_{M,z_x}(x-a_j ,t-t_\sigma)}{n_{M,+}(x-a_j )} \times \frac{n_{M,+}(x-a_j )}{n_+(x-a_j)} = \mathrm{I} \times \mathrm{II} \times \mathrm{III}
\end{equation*}
where $n_{M,z_x}$ is defined in \eqref{nM}.}

\textcolor{black}{\textbf{Control of $\mathrm{I}$.}\\
Thanks to \eqref{step1theorem4} and \eqref{zx}, it follows that
\[ \forall y \in B(z_x + a_j, \frac{\nu}{4}), \quad \sigma \leq n(y,t_\sigma).\]
Recalling that $n_{M,z_x}(x , 0) = \sigma 1_{B(z_x,\frac{\nu_2}{4})}(x)$, we conclude thanks to the comparison principle that 
\[\forall (y,s) \in \mathbb{R}^d \times [t_\sigma, +\infty[ \text{ we have } n_{M, z_x}(y-a_j, s-t_\sigma) \leq n(y,s) .\]
Since $t>t_\sigma$, we conclude that 
\begin{equation}\label{control_4}
1  \leq \frac{n(x,t)  }{n_{M,z_x}(x-a_j,t-t_\sigma)}.
\end{equation}}

\textcolor{black}{\textbf{Control of $\mathrm{II}$.}\\
Since $t-t_\sigma>t_\mu$, we deduce thanks to \eqref{t_mu} that
\begin{equation}\label{control_1}
\left(1-\mu\right)^{\frac{1}{2}} \leq \frac{n_{M,z_x}(x-a_j,t-t_\sigma)}{n_{M,+}(x-a_j)}.
\end{equation}}

\textcolor{black}{\textbf{Control of $\mathrm{III} $.}\\
Since $x-a_j \in \Omega_0$, we deduce thanks to \eqref{tmu2} that
\begin{equation}\label{control_2} 
\left(1-\mu\right)^{\frac{1}{2}} \leq \frac{n_{M,+}(x-a_j )}{n_+(x-a_j)}.
\end{equation}}

\textcolor{black}{Combining \eqref{control_1}, \eqref{control_2} and \eqref{control_4}, we conclude that for all $ (x,t) \in  \left(\Omega_{\nu_2} \cap \left\lbrace |x|< e^{ct} \right\rbrace \right) \times ]\underline{t}_\mu, +\infty[  $, we obtain
\begin{equation*}
1-\mu  \leq \frac{n(x,t)}{n_{M,z_x}(x-a_j ,t-t_\sigma)} \times \frac{n_{M,z_x}(x-a_j ,t-t_\sigma)}{n_{M,+}(x-a_j )} \times \frac{n_{M,+}(x-a_j )}{n_+(x-a_j)} = \frac{n(x,t)}{n_+(x-a_j)} = \frac{n(x,t)}{n_+(x)}.
\end{equation*}}

\textcolor{black}{This concludes the proof of Theorem \ref{theorem_convergence_1}.}
\end{proof}

\textcolor{black}{It remains to prove the claims \eqref{tmu2} and \eqref{t_mu}. The proof of \eqref{tmu2} relies on the uniqueness result stated in Theorem \eqref{theorem_existence_uniqueness_stationnary_state}.
\begin{proof}[Proof of \eqref{tmu2}]
The map $( M \in ]M_0, +\infty[ \mapsto n_{M,+})$ is increasing as $n_M$ is a sub-solution to the equation for $n_{M'}$ for $M'>M$. It converges to a weak solution of \eqref{equation_1_stat_1}. By fractional elliptic regularity, the limit is a strong solution of \eqref{equation_1_stat_1}. We conclude thanks to the uniqueness of the solution of \eqref{equation_1_stat_1} stated in Theorem \ref{theorem_existence_uniqueness_stationnary_state}. 
\end{proof}
The proof of \eqref{t_mu} relies on a compactness argument.
%
\begin{proof}[Proof of \eqref{t_mu}]
For a fixed $z \in \Omega_\nu$, the proof of convergence of $n_{M,z}$ to $n_{M,+}$ is classical thanks to \eqref{H1}. For each $z  \in \Omega_{0, \nu}$, there exists $t_z>0$ such that 
\[\forall (x,t) \in \mathbb{R}^d \times ] t_z, +\infty[ , \quad (1-\mu)^\frac{1}{2} \leq \frac{n_{M,z}(x,t)}{n_{M,+}(x)}.\]
We claim that $\underset{ z \in \Omega_{0, \nu}}{\sup} t_z < +\infty$. This assertion is true by compactness of $\overline{\Omega}_{0, \nu}$ (otherwise there exists $\overline{z} \in \overline{\Omega}_{0, \nu}$ such that $t_{\overline{z}}= +\infty$ which is a contradiction). 
\end{proof}}

\textbf{Aknowledgement}

S. Mirrahimi is grateful for partial funding from the European Research Council (ERC) under the European Union's Horizon 2020 research, innovation programme (grant agreement No639638),  held by Vincent Calvez and the chaire Mod\'{e}lisation Math\'{e}matique et Biodiversit\'{e} of V\'{e}olia Environment - Ecole Polytechnique - Museum National d'Histoire Naturelle - Fondation X. J.M. Roquejoffre was partially funded by the  ERCGrant Agreement n.  321186 - ReaDi - Reaction-Diffusion Equations, Propagation and Modelling held by Henri Berestycki, as well as the ANR project NONLOCAL ANR-14-CE25-0013.

\bibliographystyle{plain} 

{\footnotesize
\bibliography{/home/aleculie/Documents/Austin/Documents/Bibliographie/bibliographie}}

\begin{thebibliography}{10}

\bibitem{Aronson-Weinberger}
D.~G. Aronson and H.~F. Weinberger.
\newblock Multidimensional nonlinear diffusion arising in population genetics.
\newblock {\em Adv. in Math.}, 30(1):33--76, 1978.

\bibitem{Berestycki81}
H.~Berestycki.
\newblock Le nombre de solutions de certains probl\`emes semi-lin\'{e}aires
  elliptiques.
\newblock {\em J. Funct. Anal.}, 40(1):1--29, 1981.

\bibitem{BHR}
H.~Berestycki, F.~Hamel, and L.~Roques.
\newblock {Analysis of the periodically fragmented environment model:
  I–Species persistence}.
\newblock {\em {Journal of Mathematical Biology}}, 51:75--113, 2005.

\bibitem{Roquejoffre1}
H.~Berestycki, J.M. Roquejoffre, and L.~Rossi.
\newblock The periodic patch model for population dynamics with fractional
  diffusion.
\newblock {\em Discrete Contin. Dyn. Syst. Ser. S}, 4(1):1--13, 2011.

\bibitem{Bogdan}
K.~Bogdan, T.~Grzywny, and M.~Ryznar.
\newblock Heat kernel estimates for the fractional laplacian with dirichlet
  conditions.
\newblock {\em Ann. Probab.}, 38(5):1901--1923, 09 2010.

\bibitem{Cabre-Coulon-Roquejoffre}
X.~Cabr\'e, A.C. Coulon, and J.M. Roquejoffre.
\newblock Propagation in {F}isher-{KPP} type equations with fractional
  diffusion in periodic media.
\newblock {\em C. R. Math. Acad. Sci. Paris}, 350(19-20):885--890, 2012.

\bibitem{Dirichlet-Heat-Kernel}
Z.~Q. Chen, P.~Kim, and R.~Song.
\newblock Heat kernel estimates for the {D}irichlet fractional {L}aplacian.
\newblock {\em J. Eur. Math. Soc. (JEMS)}, 12(5):1307--1329, 2010.

\bibitem{Coulon}
A.~C. Coulon~Chalmin.
\newblock {\em Fast propagation in reaction-diffusion equations with fractional
  diffusion}.
\newblock PhD thesis, University Toulouse 3, 2014.

\bibitem{Hitchhiker}
E.~Di~Nezza, G.~Palatucci, and E.~Valdinoci.
\newblock Hitchhiker's guide to the fractional {S}obolev spaces.
\newblock {\em Bull. Sci. Math.}, 136(5):521--573, 2012.

\bibitem{Evans_visco_1}
L.~C. Evans.
\newblock The perturbed test function method for viscosity solutions of
  nonlinear {PDE}.
\newblock {\em Proc. Roy. Soc. Edinburgh Sect. A}, 111(3-4):359--375, 1989.

\bibitem{Evans_visco_2}
L.~C. Evans.
\newblock Periodic homogenisation of certain fully nonlinear partial
  differential equations.
\newblock {\em Proc. Roy. Soc. Edinburgh Sect. A}, 120(3-4):245--265, 1992.

\bibitem{Opt_geom_1}
L.~C. Evans and P.~E. Souganidis.
\newblock A {PDE} approach to geometric optics for certain semilinear parabolic
  equations.
\newblock {\em Indiana Univ. Math. J.}, 38(1):141--172, 1989.

\bibitem{Fisher}
R.~A. Fisher.
\newblock The wave of advance of advantageous genes.
\newblock {\em Annals of Eugenics}, 7(4):355--369, 1937.

\bibitem{Opt_geom_4}
M.~Freidlin.
\newblock {\em Functional integration and partial differential equations},
  volume 109 of {\em Annals of Mathematics Studies}.
\newblock Princeton University Press, Princeton, NJ, 1985.

\bibitem{Opt_geom_2}
M.~Freidlin.
\newblock Limit theorems for large deviations and reaction-diffusion equations.
\newblock {\em Ann. Probab.}, 13(3):639--675, 1985.

\bibitem{Freidlin_Gertner}
J.~{G\"artner} and M.~Freidlin.
\newblock The propagation of concentration waves in periodic and random media.
\newblock {\em Dokl. Akad. Nauk SSSR}, 249(3):521--525, 1979.

\bibitem{Hopf_lemma_Greco}
A.~Greco and R.~Servadei.
\newblock Hopf's lemma and constrained radial symmetry for the fractional
  {L}aplacian.
\newblock {\em Math. Res. Lett.}, 23(3):863--885, 2016.

\bibitem{Guo-Hamel}
J.-S. Guo and F.~Hamel.
\newblock {Propagation and blocking in periodically hostile environments}.
\newblock {\em {Archive for Rational Mechanics and Analysis}}, 204:945--975,
  2012.

\bibitem{KPP1937}
A.~Kolmogorov, I.~Petrovskii, and N.~Piscunov.
\newblock A study of the equation of diffusion with increase in the quantity of
  matter, and its application to a biological problem.
\newblock {\em Byul. Moskovskogo Gos. Univ.}, 1(6):1--25, 1938.

\bibitem{Leculier_1}
A.~L{\'e}culier.
\newblock {A singular limit in a fractional reaction-diffusion equation with
  periodic coefficients}.
\newblock {\em Accepted in Comm. Math. Sci.,
  https://arxiv.org/pdf/1804.09938.pdf}, April 2018.

\bibitem{Mirrahimi1}
S.~M\'el\'eard and S.~Mirrahimi.
\newblock Singular limits for reaction-diffusion equations with fractional
  {L}aplacian and local or nonlocal nonlinearity.
\newblock {\em Comm. Partial Differential Equations}, 40(5):957--993, 2015.

\bibitem{Ros-Oton-Serra_2}
X.~Ros-Oton and X.~Fernandez.
\newblock Regularity theory for general stable operators.
\newblock {\em J. Differential Equations}, 260(12):8675--8715, 2016.

\bibitem{Ros-Oton-Serra}
X.~Ros-Oton and J.~Serra.
\newblock The {D}irichlet problem for the fractional {L}aplacian: regularity up
  to the boundary.
\newblock {\em J. Math. Pures Appl. (9)}, 101(3):275--302, 2014.

\bibitem{Smoller}
J.~Smoller.
\newblock {\em Shock waves and reaction-diffusion equations}, volume 258 of
  {\em Grundlehren der Mathematischen Wissenschaften [Fundamental Principles of
  Mathematical Science]}.
\newblock Springer-Verlag, New York-Berlin, 1983.

\end{thebibliography}

\end{document}